\documentclass[a4paper]{article}

\usepackage{amsmath,amssymb,color}

\usepackage{graphicx}
\usepackage{subfigure}
\usepackage{subdepth}
\usepackage{stmaryrd} 
\usepackage{lineno,etoolbox,units}

\allowdisplaybreaks
\DeclareGraphicsExtensions{.pdf,.eps}


\newcommand{\vertiii}[1]{{\left\vert\kern-0.25ex\left\vert\kern-0.25ex\left\vert #1 
    \right\vert\kern-0.25ex\right\vert\kern-0.25ex\right\vert}}
\newcommand{\bu}{\boldsymbol u}

\newcommand{\di}{\mathrm d}

\newtheorem{Theorem}{Theorem}[section]
\newtheorem{lema}[Theorem]{Lemma}

\newtheorem{remark}[Theorem]{Remark}

\newtheorem{Proof}{{\em Proof:}}

\newenvironment{proof}{\begin{Proof}\rm}{\hfill $\Box$ \end{Proof}}

\begin{document}
\title{Pointwise error bounds in POD methods without difference quotients}
\author{Bosco
Garc\'{\i}a-Archilla\thanks{Departamento de Matem\'atica Aplicada
II, Universidad de Sevilla, Sevilla, Spain. Research is supported
by grants PID2021-123200NB-I00 and PID2022-136550NB-I00  funded by MCIN/AEI/
10.13039/501100011033 and by ERDF A way of making Europe, by
the European Union. (bosco@esi.us.es)}
  \and Julia Novo\thanks{Departamento de
Matem\'aticas, Universidad Aut\'onoma de Madrid, Spain. Research is supported
by grant PID2022-136550NB-I00  funded by MCIN/AEI/
10.13039/501100011033 and by ERDF A way of making Europe, by
the European Union. (julia.novo@uam.es)} }
\date{\today}

\maketitle
\begin{abstract} In this paper we consider proper orthogonal decomposition (POD) methods that do not include difference quotients (DQs) of snapshots  in the data set. The inclusion of DQs  have been shown in the literature to be a key element in obtaining error bounds that do not degrade with the number of snapshots. More recently, the inclusion of DQs has allowed to obtain pointwise (as opposed to averaged) error bounds that decay with the same convergence rate (in terms of the POD singular values) as averaged ones. In the present paper, for POD methods not including DQs in their data set, we obtain error bounds that do not degrade with the number of snapshots if the function from where the snapshots are taken has certain  degree of smoothness. Moreover,  the rate of convergence is as close as that of methods including DQs as the smoothness of the function providing the snapshots allows. We do this by obtaining discrete counterparts of Agmon and interpolation inequalities in Sobolev spaces. Numerical experiments validating these estimates are also presented.
\end{abstract}
\bigskip
{\bf Keywords.} Proper orthotonal decomposition, error analysis, pointwise error estimates in time

\bigskip
{\bf AMS subject classifications.} {65M15,65M60}

\section{Introduction}

There seems to be an unclosed debate on wether it is necessary or advisable to 
include the difference quotients (DQs) of the snapshots (or function values) in the data set in proper orthogonal decomposition (POD) methods.
These allow for a remarkable dimension reduction in large-scale dynamical systems by projecting the equations onto smaller spaces spanned by the first elements of the POD basis. This basis is extracted from the data set, originally the set of snapshots,  although, as mentioned, some authors include their DQs.                
Inclusion of these has been essential to obtain optimal error bounds~\cite{Ku-Vol,iliescu_et_al_q,singler,samu_et_al} for POD methods.
More recently, pointwise (in time) error bounds have been proved in case DQs are added to the set of snapshots \cite{samu_et_al}, or if the DQs are complemented with just one snapshot~\cite{locke-singler}.
In fact,  counterexamples are presented in~\cite{samu_et_al} showing that if DQs are not included, pointwise projection errors degrade with the number of snapshots. However, this degradation is hard to find in many practical instances, 
 and, at the same time, while some authors report improvement in their numerical simulations if DQs are included in the data set~\cite{iliescu_et_al_q}, others find just the opposite~\cite{nos_pos_supg,kean_sch}.

The present paper does not pretend to settle the argument, but hopes to shed some light on why, in agreement with the numerical experience of some authors,
it may be possible to obtain good results in practice without including DQs. In particular, we show that if a function has first derivatives (with respect to time) square-integrable in time, POD projection errors do not degrade with the number of snapshots.
Furthermore, if second or higher-order derivatives are square-integrable in time, POD methods are convergent and, the higher the order of the derivatives that are square integrable, the closer to optimal the rate of convergence becomes. We do this by obtaining discrete versions of Agmon and interpolation inequalities in Sobolev
spaces, which allow to bound the $L^\infty$ norm of a function in terms of the $L^2$ norm and higher-order Sobolev's norms. 

We remark that our results do not contradict previous results in the literature. On the one hand, the error bounds we prove are as close to optimal as the smoothness of the solution allows, but not optimal.  On the other hand, concerning the counterexamples~in~\cite{samu_et_al}, we notice that they require the snapshots to be multiples of the POD basis and,
hence,  an orthogonal set, a property that is lost if a function is smooth and the snapshots are function values taken at increasingly closer times as the number of snapshots increases.


The reference \cite{Ku-Vol} is the first
one showing convergence results for POD methods for parabolic problems. Starting with this paper  one can find several references
in the literature developing the error analysis of POD methods. We mention some references that are not intended to be a complete list. 
In \cite{Kunisch-Volkwein2} the same authors of \cite{Ku-Vol} provide error estimates for POD methods for nonlinear parabolic systems 
arising in fluid dynamics. The authors of \cite{chapelle_et_al} show that comparing the POD approximation with the $L^2$ projection
over the reduced order space instead of the $H_0^1(\Omega )$-projection (as in \cite{Ku-Vol}) DQs are
not required in the set of snapshots. However, in that case,
the norm of the gradient of the $L^2$-projection has to be bounded. This leads to non-optimal estimates for the truncation errors, see \cite{samu_et_al}. POD approximation errors in different norms and using different projections can also be found in \cite{singler}.

The rest of the paper is as follows. In Section~\ref{se:main} we present the main results of this paper and their application to obtain pointwise error bounds in POD methods based only on snapshots. Section~\ref{se:numer} present some numerical experiments. In Section~\ref{se:abstract}, the auxiliary results needed to prove the main results in~Section~\ref{se:main} are stated and proved, and the final section contains the conclusions.

\section{Main results}\label{se:main}
Let $\Omega$ be a bounded region in~${\mathbb R}^d$ with a smooth boundary or a Lipschitz polygonal boundary. We use standard notation, so that $H^s(\Omega)$ denotes the standard Sobolev space of index~$s$.
We also consider the space~$H^1_0(\Omega)$ of functions vanishing on the boundary and with square integrable first derivatives.
Let $T>0$ and let $u: \Omega\times [0,T] \rightarrow {\mathbb R}$ be a function  
such that its partial derivative with respect to its second argument, denoted by~$t$ henceforth, satisfies that $u\in H^m(0,T,X)$, where $X$ denotes either~$L^2(\Omega)$ or $H^1_0(\Omega)$, and
\begin{equation}
\label{norma_Hm0TX}
\left\| u\right\|_{H^m(0,T,X)} =\biggl( \sum_{k=0}^m \frac{1}{T^{2(m-j)}}\int_0^T \bigl\| \partial_t^k u(\cdot,t)\bigr\|_X^2\,\di t\biggr)^{1/2}
\end{equation}
and~$\left\|\cdot\right\|_X$ denotes the norm in~$X$, that is $\left\| f\right\|_X=(f,f)^{1/2}$ if $X=L^2(\Omega)$ or~$\left\| f\right\|_X=(\nabla f,\nabla f)^{1/2}$ if $X=H^1_0(\Omega)$. Typically, in many practical instances, $u$ is a finite element approximation to the solution of some time-dependent partial differential equation. The factors~$1/T^{2(m-j)}$ in the definition of $\left\| \cdot\right\|_{H^m(0,T,X)}$ are introduced so that the constants in the estimates that follow are scale invariant~(see e.g., \cite[Chapter 4]{Constantin-Foias}). In what follows, if a constant is denoted with a lower case letter, this means that it is scale-invariant.

Given a positive integer $M>0$ and the corresponding time-step $\Delta t=T/M$, we consider the time levels $t_n=n\Delta t$, $n=0,\ldots,M$, and the function values or snapshots of~$u$
$$
u^n = u(\cdot,t_n),\qquad n=0,\ldots,M.
$$
We denote by $\sigma_1\ge \ldots \ge \sigma_{J}$ the singular values and by $\varphi^1,\ldots,\varphi^{J}$ the left singular vectors of the operator from ${\mathbb R}^{M+1}$ (endorsed with the standard Euclidean norm) which maps every vector $x=[x_1,\ldots,x_{M+1}]^T$ to
\begin{equation}
\label{elK}
Kx=\frac{1}{\sqrt{M+1}}\sum_{n=0}^M x_{n+1} u^n,
\end{equation}
operator which, being of finite rank, is compact.
 The singular values satisfy that $\lambda_k=\sigma_k^2$, for $k=1,\ldots,J$ are the positive eigenvalues of the correlation matrix $M=\frac{1}{M+1}((u^i,u^j)_X)_{0\le i,j\le M}$.
The left singular vectors, known as the POD basis, are mutually orthogonal and with unit norm in~$X$. For $1\le r\le d$ let us denote by
$$
V_r=\hbox{\rm span}(\varphi^1,\ldots,\varphi^r),
$$
and by~$P^r_X:X\rightarrow V_r$ the orthogonal projection of~$X$ onto~$V_r$. It is well-known that
\begin{equation}
\label{basic_id}
\frac{1}{M+1}\sum_{n=0}^M \bigl\|u^n - P_X^r u^n\bigr\|_X^2 =\sum_{k>r} \sigma_k^2.
\end{equation}
In the present paper, the rate of decay given by the square root of the right-hand side above will be referred to as {\em optimal} (see \cite{iliescu_et_al_q}).

In many instances of interest in practice, at least when $u$ is a finite-element approximation to the solution of a dissipative partial differential equation (PDE), the singular values decay exponentially fast, so that for very small values of~$r$ (compared with the dimension of the finite-element space) the right-hand side of~\eqref{basic_id} is smaller than the error of~$u$ with respect the solution of the PDE it approximates. 

POD methods are Galerkin methods where the approximating spaces are the spaces~$V_r$. In POD methods, one obtains values~$u_r^n\in V_r$ intended to be approximations~$u_r^n\approx u^n$, $n=0,\ldots, M$, and~\eqref{basic_id} is key in obtaining estimates for~the errors $u_r^n-u^n$. Yet, the fact that the left hand side~\eqref{basic_id} is an average allows only for averaged estimates of the errors $u_r^n-u^n$, and one would be interested
in pointwise estimates. In order to obtain them, some authors assume (without a result to support it) that the individual errors $\bigl\| u^n-P_X^r u^n\bigr\|_X$
decay with~$r$ as the square root of the right-hand side of~\eqref{basic_id} (see e.g., \cite[(2.9)]{iliescu_wang}).
%

{Yet, in principle, the only rigorous {\em pointwise} estimate that one can deduce from~\eqref{basic_id} is
\begin{equation}
\label{degraded_pointwise}
 \bigl\|u^n - P_X^r u^n\bigr\|_X \le \sqrt{M+1} \biggl(\sum_{k>r} \sigma_k^2\biggr)^{\frac{1}{2}}, \qquad n=0,\ldots,M.
\end{equation}
This estimate is sharp as shown in examples provided in~\cite{samu_et_al}, where for at least one value of~$n$, the equality is reached. Estimate~\eqref{degraded_pointwise} degrades with the number $M+1$ of snapshots. One would expect then in practice that if $M$ is increased while maintaining the value of~$r$ the individual errors
$ \bigl\|u^n - P_X^r u^n\bigr\|_X$ should increase accordingly. Yet this is not necessary the case as results in Table~\ref{tab_T1r25} show, with correspond to the snapshots of the quadratic FE approximation to the periodic solution of the Brusselator problem described in~\cite{nos_semi} (see also Section~\ref{se:numer} below) in the case where~$X=L^2(\Omega)$, $r=25$ and~$T$ is one period.
\begin{table}
\begin{center}
$$
\begin{array}{|c|c|c|c|c|}
\hline
M& 128 & 256& 512 & 1024\\
\hline
\hbox{\rm\textcolor{black}{max.\ err.}} &1.12\times 10^{-4} &    1.12\times 10^{-4} &     1.14\times 10^{-5} &     1.15\times 10^{-5}\\
\hline
\end{array}
$$
\end{center}
\caption{\label{tab_T1r25} Maximum pointwise projection error  $\max_{0\le n\le M}\bigl\|u^n - P_X^r u^n\bigr\|_X$ for different values of~the number of snapshots $M+1$
and $u$ being
the quadratic FE approximation to the periodic solution of the Brusselator problem described in~\cite{nos_semi}, when~$X=L^2(\Omega)$,~$r=25$ and $T$ is one period.}
\end{table}
The results in Table~\ref{tab_T1r25} are in agreement with the following result.
\begin{Theorem}\label{th:1} Let $u$ be bounded in~$H^1(0,T,X)$. Then, for the constant~$c_A$ defined in~\eqref{c_A} below, the following bound 
holds:
$$
\max_{0\le n\le M}\bigl\|u^n - P_X^r u^n\bigr\|_X \le c_A \biggl(\sum_{k>r} \sigma_k^2\biggr)^{\frac{1}{4}}\bigl\| (I-P_X^r)\partial_t u\bigr)\|_{L^2(0,T,X)}^{\frac{1}{2}} + \frac{1}{\sqrt{T}}
\biggl(\sum_{k>r} \sigma_k^2\biggr)^{\frac{1}{2}}.
$$
If $u_0+\cdots+u_M=0$, then the second term on the right-hand side above can be omitted.
\end{Theorem}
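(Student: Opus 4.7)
The plan is to apply a discrete Agmon-type inequality to the $X$-valued function $e(t):=(I-P_X^r)u(\cdot,t)$. Since $P_X^r$ is a bounded linear projection that does not depend on $t$, it commutes with $\partial_t$, so $\partial_t e = (I-P_X^r)\partial_t u$ and $e(t_n) = u^n - P_X^r u^n$. Hence the left-hand side of the inequality to be proved is $\max_n\|e(t_n)\|_X$, and the derivative norm on the right-hand side is exactly $\|\partial_t e\|_{L^2(0,T,X)}$.

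The starting point is the identity, valid for every pair of indices $m$ and $n$,
\begin{equation*}
\|e(t_n)\|_X^2 - \|e(t_m)\|_X^2 = 2\int_{t_m}^{t_n}(\partial_\tau e,e)_X\,\di\tau,
\end{equation*}
which follows from the fundamental theorem of calculus applied to $t\mapsto\|e(t)\|_X^2$. Combined with the discrete Agmon inequality for $X$-valued sequences established in Section~\ref{se:abstract} (the discrete counterpart of the scalar estimate $\|f\|_{L^\infty(0,T)}^2\lesssim T^{-1}\|f\|_{L^2(0,T)}^2+\|f\|_{L^2(0,T)}\|f'\|_{L^2(0,T)}$), this identity yields an inequality of the form
\begin{equation*}
\max_n\|e(t_n)\|_X^2 \le c_A^2\,\Bigl(\tfrac{1}{M+1}\sum_n\|e(t_n)\|_X^2\Bigr)^{1/2}\|\partial_t e\|_{L^2(0,T,X)} + \tfrac{1}{T(M+1)}\sum_n\|e(t_n)\|_X^2.
\end{equation*}
Substituting the POD identity \eqref{basic_id}, $\frac{1}{M+1}\sum_n\|e(t_n)\|_X^2=\sum_{k>r}\sigma_k^2$, and using $\sqrt{a+b}\le\sqrt a+\sqrt b$ then delivers the stated bound.

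For the final assertion, the hypothesis $u_0+\cdots+u_M=0$ forces $\sum_n e(t_n)=(I-P_X^r)\sum_n u^n=0$, so the sequence $\{e(t_n)\}_n$ has vanishing discrete mean. Just as in the continuous setting, where $\int_0^T f\,\di t = 0$ permits dropping the $T^{-1}\|f\|_{L^2}^2$ term in Agmon's inequality (because $f$ must then vanish at some intermediate point), this discrete zero-mean condition triggers a Poincar\'e-type improvement of the discrete Agmon inequality that removes the lower-order additive term entirely.

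The main obstacle I anticipate is establishing the discrete Agmon inequality with the sharp prefactor $1/T$ in the additive term, rather than the naive $1$. A direct Cauchy--Schwarz applied to $\int_{t_m}^{t_n}(\partial_\tau e,e)_X\,\di\tau$ produces the continuous norm $\|e\|_{L^2(0,T,X)}$, which must then be traded for the discrete mean via a quadrature-type estimate of the form $\|e\|_{L^2(0,T,X)}^2\lesssim\Delta t\sum_n\|e(t_n)\|_X^2+(\Delta t)^2\|\partial_t e\|_{L^2(0,T,X)}^2$, and the quadratic-in-$\Delta t$ residue has to be absorbed (via Young's inequality) in such a way that the stated convergence rate in the singular values is preserved.
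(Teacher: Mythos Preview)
Your core strategy is right and matches the paper: apply a discrete Agmon inequality to the sequence $e^n=(I-P_X^r)u^n$, use that $P_X^r$ commutes with~$\partial_t$, and substitute the POD identity~\eqref{basic_id}. The paper's proof is literally a one-line invocation of Theorem~\ref{th:cotas_f_g} (case $m=1$, which in turn rests on Lemma~\ref{le:Agmond}) with $f=(I-P_X^r)u$.

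The difference lies in how you reach your displayed inequality. The paper's Lemma~\ref{le:Agmond} is purely discrete: it telescopes
\[
\|f_n\|^2 - \|f_m\|^2 = \sum_{j=m+1}^n \mathrm{Re}\bigl((f_j+f_{j-1},f_j-f_{j-1})\bigr) \le 2\,\|f_\tau\|_0\,\|Df_\tau\|_0,
\]
and only at the very last step trades the discrete difference-quotient norm for the continuous one via the elementary bound $\|Df_\tau\|_0 \le \|\partial_t f\|_{L^2(0,T,X)}$ (inequality~\eqref{d-c-k} with $k=1$). The discrete norm $\|f_\tau\|_0$, which is exactly what~\eqref{basic_id} controls, stays on the right-hand side throughout; no quadrature is ever needed.

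By contrast, your starting point $\|e(t_n)\|^2-\|e(t_m)\|^2 = 2\int_{t_m}^{t_n}(\partial_\tau e,e)_X\,\di\tau$ immediately produces the \emph{continuous} norm $\|e\|_{L^2(0,T,X)}$ on the right, and the ``main obstacle'' you anticipate (trading it back for the discrete average via a quadrature estimate and then absorbing the $(\Delta t)^2$ residue) is entirely self-inflicted. That route can be pushed through, but you would not land on the stated constant $c_A$, and it buys nothing over the direct discrete argument. Since you already cite the discrete Agmon inequality of Section~\ref{se:abstract}, the continuous identity is redundant: Lemma~\ref{le:Agmond} applied to the mean-subtracted sequence, together with $\|m_0\|\le T^{-1/2}\|f_\tau\|_0$, gives your displayed inequality in one step.

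Your treatment of the zero-mean case is correct: $\sum_n u^n=0$ forces $\sum_n e^n=0$ by linearity of $I-P_X^r$, and Lemma~\ref{le:Agmond} is stated precisely for zero-mean sequences, so the additive $T^{-1/2}$ term drops.
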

\begin{proof} The proof is a direct consequence of the case $m=1$ in estimate~\eqref{cota_Df_g} in Theorem~\ref{th:cotas_f_g} below when applied to~$f=u-P_X^r u$ and 
of identity~\eqref{basic_id} above. Then norms~$\left\|cdot\right\|_0$ in~\eqref{cota_Df_g} are defined in~(\ref{norma0}--\ref{norma1}).
\end{proof}

Observe that in the pointwise estimate in Theorem~\ref{th:1} no factor depending on $M$ appears on the right-hand side, except, perhaps, the singular values, but if~$u\in H^1(0,T,X)$ then, $\sigma_k\rightarrow \sigma_{c,k}$ as~$M\rightarrow \infty$, where $\sigma_{c,1}\ge \sigma_{c,2} \ge \ldots\,$, are the singular values of the operator $K_c:L^2(0,T)\rightarrow X$ given by \cite[Section~3.2]{Kunisch-Volkwein2}
$$
K_cf=\frac{1}{T} \int_0^t f(t)u(t)\,\di t.
$$
Observe also that, at worst,
$$
\bigl\| (I-P_X^r)\partial_t u\bigr\|_{L^2(0,T,X)} \le \bigl\| \partial_t u\|_{L^2(0,T,X)}.
$$

Thus, Theorem~\ref{th:1} guarantees that pointwise projection errors do not degrade as the number of snapshots increases. Furthermore, they decay with $r$, although like
$\left(\Sigma_{k>r} \sigma_k^2\right)^{1/4}$,
which is the square root of the {\rm optimal} rate. Yet, the following result shows that pointwise projection errors decay with a rate as close to optimal as the smoothness of~$u$ allows.


\begin{Theorem}\label{th:2}
Let $u$ be bounded in~$H^m(0,T,X)$. Then, for the constants~$c_A$ defined in~~\eqref{c_A} and~$c_m$ in~Lemma~\ref{le:esti_maxg}
 below, 
the following bound
holds:
\begin{align*}
\max_{0\le n\le M} 
\bigl\|u^n - P_X^r u^n\bigr\|_X \le& \sqrt{2}  c_Ac_m^{\frac{1}{2}} \biggl(\sum_{k>r} \sigma_k^2\biggr)^{\frac{1}{2}-\frac{1}{4m}}\bigl\| (I-P_X^r)\partial_t u\bigr\|_{H^{m-1}(0,T,X)}^{\frac{1}{2m}}
\nonumber
\\
&{}+ \frac{1}{\sqrt{T}}\biggl(\sum_{k>r} \sigma_k^2\biggr)^{\frac{1}{2}}.
\end{align*}
If $u_0+\cdots+u_M=0$, then the second term on the right-hand side above can be omitted.
\end{Theorem}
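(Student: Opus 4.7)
The plan is to reduce Theorem~\ref{th:2} to the general $m$ case of the auxiliary estimate~\eqref{cota_Df_g} from Theorem~\ref{th:cotas_f_g}, exactly as Theorem~\ref{th:1} reduced to the $m=1$ case. The only new ingredient relative to the proof of Theorem~\ref{th:1} is that the right-hand side of~\eqref{cota_Df_g} involves the $H^{m-1}(0,T,X)$-norm of the time derivative of the test function, and that the interpolation exponent degenerates from $1/4$ to $1/2-1/(4m)$.

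First, I would set $f(\cdot,t)=u(\cdot,t)-P_X^r u(\cdot,t)$, which is well defined as a function on $[0,T]$ with values in~$X$ because $P_X^r$ is a bounded (in fact orthogonal) projection onto a finite-dimensional subspace of~$X$ and therefore commutes with integration and differentiation in~$t$. In particular $\partial_t^k f = (I-P_X^r)\partial_t^k u$ for every $k\le m$, so that $\|\partial_t f\|_{H^{m-1}(0,T,X)}=\|(I-P_X^r)\partial_t u\|_{H^{m-1}(0,T,X)}$. Evaluating at the nodes gives $f^n=u^n-P_X^r u^n$, and identity~\eqref{basic_id} converts the discrete $\ell^2$-in-time norm (denoted $\left\|\cdot\right\|_0$ in~\eqref{norma0}) of the sequence $(f^n)_{n=0}^M$ into $\bigl(\sum_{k>r}\sigma_k^2\bigr)^{1/2}$.

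Next, I would invoke~\eqref{cota_Df_g} for this $f$ with the given~$m$. The estimate should read, schematically, that the maximum of $\|f^n\|_X$ is bounded by a product of the discrete $\ell^2$-norm raised to power $1-1/(2m)$ and the $H^{m-1}(0,T,X)$-norm of~$\partial_t f$ raised to power $1/(2m)$, with constant~$c_A c_m^{1/2}$ coming from the discrete Agmon inequality and the interpolation Lemma~\ref{le:esti_maxg}, plus a low-order correction involving the discrete mean of~$f$, which contributes the $T^{-1/2}\bigl(\sum_{k>r}\sigma_k^2\bigr)^{1/2}$ term and vanishes when $u^0+\cdots+u^M=0$ (as the projection inherits the zero-mean property). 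Writing the exponents as $(1/2)(1-1/(2m))=1/2-1/(4m)$ on the $\sum_{k>r}\sigma_k^2$ factor and $1/(2m)$ on the Sobolev factor yields the claimed bound; the $\sqrt{2}$ out front is the familiar factor one gets when splitting a sum of two squares before taking the square root inside the Agmon-type inequality.

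The main obstacle, from the point of view of this section, is not the above bookkeeping but making sure that the auxiliary discrete Agmon and interpolation inequalities in Section~\ref{se:abstract} (the statement of Theorem~\ref{th:cotas_f_g} and Lemma~\ref{le:esti_maxg}) are formulated for abstract Hilbert-space-valued sequences so that plugging in $f=u-P_X^r u\in X$ is legitimate; once this is granted, the deduction of Theorem~\ref{th:2} is essentially a one-line consequence, entirely parallel to that of Theorem~\ref{th:1}.
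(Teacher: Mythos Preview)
Your proposal is correct and follows exactly the paper's own approach: apply estimate~\eqref{cota_Df_g} of Theorem~\ref{th:cotas_f_g} to $f=u-P_X^r u$ and substitute~\eqref{basic_id} for the discrete $\ell^2$-norm, with the zero-mean case handled identically. One small remark: the $\sqrt{2}$ in the statement is not from splitting a sum of squares but is simply the upper bound $2^{(m-1)/(2m)}\le\sqrt{2}$ for the constant appearing in~\eqref{cota_Df_g}.
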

\begin{proof} The proof is a direct 
consequence of~estimate~\eqref{cota_Df_g} in Theorem~\ref{th:cotas_f_g} below 
applied to~$f=u-P_X^r u$ and 
of identity~\eqref{basic_id} above.
\end{proof}


The above results account for projection errors, but in the analysis of POD methods the error between the POD approximation and the snapshots, $u_r^n - u^n$ depend also on
the difference quotients
\begin{equation}
\label{DQ}
Du^n=\frac{u^n-u^{n-1}}{\Delta t}, \qquad n=1,\ldots,M.
\end{equation}
for wich we have the following result.

\begin{Theorem}\label{th:3} In the conditions of Theorem~\ref{th:1} the following bound holds:
$$
\biggr(\Delta t\sum_{n=1}^M \bigl (I-P_X^r)Du^n\bigr\|_X^2\biggr)^{\frac{1}{2}}\le 
2c_m \biggl(\sum_{k>r} \sigma_k^2\biggr)^{\frac{m-1}{2m}} \bigl\| (I-P_X^r)\partial_t u\bigr)\|_{H^{m-1}(0,T,X)}^{\frac{1}{m}}.
$$
\end{Theorem}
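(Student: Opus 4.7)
The plan is to reduce the bound to a discrete interpolation inequality applied to the sequence of projection errors $f^n := (I-P_X^r)u^n$. Since $P_X^r$ is the orthogonal projection onto the fixed finite-dimensional subspace $V_r\subset X$, it commutes with time differentiation, so that setting $f(\cdot,t):=u(\cdot,t)-P_X^r u(\cdot,t)$ we have $\partial_t^j f=(I-P_X^r)\partial_t^j u$ for every $j\ge 0$. In particular, the snapshot difference quotients satisfy $(I-P_X^r)Du^n=Df^n$ with $Df^n=(f^n-f^{n-1})/\Delta t$, so the quantity to estimate is precisely the discrete $L^2(0,T,X)$-type norm of $Df$.

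Next, I would invoke Lemma~\ref{le:esti_maxg} (the discrete interpolation inequality underlying Theorem~\ref{th:2} and stated in Section~\ref{se:abstract}) to obtain a Gagliardo--Nirenberg-type estimate of the form
\[
\left(\Delta t\sum_{n=1}^M \|Dg^n\|_X^2\right)^{1/2} \le c_m \left(\Delta t\sum_{n=0}^M \|g^n\|_X^2\right)^{(m-1)/(2m)}\|\partial_t^m g\|_{L^2(0,T,X)}^{1/m}
\]
for any sufficiently smooth $g$ with $g^n=g(\cdot,t_n)$. Applied to $g=f$, and using identity~\eqref{basic_id} in the form $\Delta t\sum_{n=0}^M\|f^n\|_X^2=\frac{(M+1)T}{M}\sum_{k>r}\sigma_k^2\le 2T\sum_{k>r}\sigma_k^2$ together with the immediate bound $\|\partial_t^m f\|_{L^2(0,T,X)}\le \|(I-P_X^r)\partial_t u\|_{H^{m-1}(0,T,X)}$---which follows from the chain $\partial_t^m f=\partial_t^{m-1}[(I-P_X^r)\partial_t u]$ and the definition~\eqref{norma_Hm0TX} of the $H^{m-1}$-norm---the stated estimate follows after absorbing the factor $(2T)^{(m-1)/(2m)}$ (and any remaining $T$-scaling from~\eqref{norma_Hm0TX}) into $c_m$, consistently with the scale-invariant convention for lower-case constants.

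The main obstacle is establishing the discrete interpolation inequality itself. The case $m=1$ is immediate: writing $f^n-f^{n-1}=\int_{t_{n-1}}^{t_n}\partial_t f\,\di t$ and applying Cauchy--Schwarz on each subinterval followed by summation yields $\left(\Delta t\sum_{n=1}^M\|Df^n\|_X^2\right)^{1/2}\le \|\partial_t f\|_{L^2(0,T,X)}$, which is the target with $c_1=1/2$. For $m\ge 2$, the natural route is to couple this bound with the continuous time-only Gagliardo--Nirenberg inequality $\|\partial_t f\|_{L^2(0,T,X)}\le C\|f\|_{L^2(0,T,X)}^{(m-1)/m}\|\partial_t^m f\|_{L^2(0,T,X)}^{1/m}$ and then to pass from the continuous-in-time $L^2$-norm of $f$ to its discrete counterpart $\left(\Delta t\sum_{n=0}^M\|f^n\|_X^2\right)^{1/2}$ via the representation $f(t)=f^n+\int_{t_n}^t\partial_t f(s)\,\di s$ on each subinterval, absorbing the resulting $\Delta t$-small cross term with a Young-type inequality. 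Carrying out this bridge within the sequence framework of Section~\ref{se:abstract}, so that the final inequality involves only discrete quantities on the right-hand side (apart from the continuous $m$-th derivative), is the technical heart of the argument.
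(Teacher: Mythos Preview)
Your high-level reduction is exactly the paper's: set $f=(I-P_X^r)u$, apply the discrete interpolation bound from Section~\ref{se:abstract}, and read off $\|f_\tau\|_0$ from identity~\eqref{basic_id}. The relevant reference, however, is estimate~\eqref{cota_Df_g_D1} in Theorem~\ref{th:cotas_f_g} (which in turn rests on Lemma~\ref{le:uno_mg}), not Lemma~\ref{le:esti_maxg}; the latter is the pointwise bound behind Theorem~\ref{th:2}, not the averaged one you need here.

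Two genuine gaps remain. First, the displayed interpolation inequality with only $\|\partial_t^m g\|_{L^2(0,T,X)}^{1/m}$ on the right is false in the non-periodic setting: for $g(t)=t$ and any $m\ge 2$ the right-hand side vanishes while $\|Dg_\tau\|_0=\sqrt{T}$. The paper's~\eqref{cota_Df_g_D1} carries the full norm $\|\partial_t g\|_{H^{m-1}(0,T,X)}^{1/m}$ precisely for this reason (the top-derivative-only version is valid only in the periodic case, cf.~\eqref{esti2}); the same objection kills the continuous Gagliardo--Nirenberg inequality you quote with just the top derivative. Second, and more fundamentally, your proposed route to the inequality---$\|Df_\tau\|_0\le\|\partial_t f\|_{L^2}$, then continuous GN, then bridge $\|f\|_{L^2}$ back to $\|f_\tau\|_0$---cannot close: the chain would force $\|\partial_t f\|_{L^2}$ to be bounded by the discrete target $c\,\|f_\tau\|_0^{(m-1)/m}\|\partial_t f\|_{H^{m-1}}^{1/m}$, but this is false whenever $f$ vanishes at all nodes $t_n$ yet is nonzero in between (so $\|f_\tau\|_0=0$ while $\|\partial_t f\|_{L^2}>0$). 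The paper sidesteps this by doing the interpolation entirely at the discrete level---Lemma~\ref{le:tresdg} (summation by parts) iterated in Lemma~\ref{le:uno_mg} yields $\|D^1 f_\tau\|_0\le c_m\|f_\tau\|_0^{(m-1)/m}\|D^1 f_\tau\|_{m-1}^{1/m}$---and only afterwards passes from the discrete higher-order norm $\|D^1 f_\tau\|_{m-1}$ to the continuous $\|\partial_t f\|_{H^{m-1}}$ via the elementary estimate~\eqref{d-c-kg}, which is the easy direction (divided differences bounded by derivatives) rather than the hard one you attempt.
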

\begin{proof} The proof is a direct consequence of estimate~\eqref{cota_Df_g_D1} in Theorem~\ref{th:cotas_f_g} below 
applied to~$f=u-P_X^r u$ and 
of identity~\eqref{basic_id} above.
\end{proof}

\begin{remark}\label{re:periodic}\rm An important case in practice in that of periodic orbits, since they are key elements in bifurcation diagrams of many dynamical systems associated to PDEs. In this case, the snapshot $u_0$ can be omitted in \eqref{elK}  and~\eqref{basic_id}, and $M+1$ can be replaced by~$M$ in previous formulae (see also Section~\ref{se:periodic}). Also, in the estimates in Theorems~\ref{th:1} and~\ref{th:2} one can replace the constant~$c_m$ by~$1$, and $\bigl\| (I-P_X^r)\partial_t u\bigr\|_{H^{m-1}(0,T,X)}$ by
$\bigl\| (I-P_X^r)\partial_t^m u\bigr\|_{L^2(0,T,X)}$, that is
\begin{align}
\label{esti1}
\max_{0\le n\le M} \bigl\|u^n - P_X^r u^n\bigr\|_X \le& \sqrt{2}  c_A\biggl(\sum_{k>r} \sigma_k^2\biggr)^{\frac{1}{2}-\frac{1}{4m}}\bigl\| (I-P_X^r)\partial^m_t u\bigr\|_{L^2(0,T,X)}^{\frac{1}{2m}}
\nonumber
\\
&{}+ \frac{1}{\sqrt{T}}\biggl(\sum_{k>r} \sigma_k^2\biggr)^{\frac{1}{2}},
\end{align}
\begin{equation}
\label{esti2}
\biggr(\Delta t\sum_{n=1}^M \bigl\| (I-P_X^r)Du^n\bigr\|_X^2\biggr)^{\frac{1}{2}}\le 
2\biggl(\sum_{k>r} \sigma_k^2\biggr)^{\frac{1}{2}-\frac{1}{2m}} \bigl\| (I-P_X^r)\partial_t^m u\bigr\|_{L^2(0,T,X)}^{\frac{1}{m}}.
\end{equation}
As in~Theorem~\ref{th:1}, the second term on the right-hand side of~\eqref{esti2} can be omitted if the snapshots have zero mean. The proof of these estimates follows by
applying~Theorem~\ref{th:cotas_f} to~$(I-P_X^r) u$.

Some reduction in the size of the constants $c_m$ can be obtained also when computing quasi periodic orbits in an invariant tori if one uses the techniques described in~\cite{Joan_toros} to compute them, since although $u(T)\ne u(0)$, one still has $\left\| u(T)-u(0)\right\|\ll 1$, and one can modify the technique so that this is also the case with first derivatives or higher order ones.
\end{remark}

\begin{remark}\label{re:diferente}\rm In the analysis of POD methods, as it will be the case in next section, one usually has to estimate~$(I-P_X^r)$ in a norm $\|\cdot\|$ other than that of~$X$. If $\left\|\cdot \right\|$ is the norm of a Hilbert space~$W$ containing the snapshots~$u^0,\ldots, u^M$, then one can use \cite[Lemma~2.2]{samu_et_al} to get
\begin{equation}
\label{different_norms}
\frac{1}{M+1} \sum_{n=0}^m \bigl\| (I-P_X^r)u^n\bigr\|^2 = \sum_{k>r} \sigma_k^2 \bigl\| \varphi^k\bigr\|^2
\end{equation}
Applying this estimate instead of the identity~\eqref{basic_id} in the proofs of Theorems~\ref{th:2} and~\ref{th:3} one gets the following estimates
\begin{align}
\label{diferente1}
\max_{0\le n\le M} \bigl\|u^n - P_X^r u^n\bigr\| \le&\sqrt{2}  c_Ac_m^{\frac{1}{2}} \biggl(\sum_{k>r} \sigma_k^2 \bigl\| \varphi^k\bigr\|^2\biggr)^{\frac{1}{2}-\frac{1}{4m}}\bigl\| (I-P_X^r)\partial_t u\bigr\|_{H^{m-1}(0,T,W)}^{\frac{1}{2m}}
\nonumber
\\
&{}+ \frac{1}{\sqrt{T}}\biggl(\sum_{k>r} \sigma_k^2\bigl\| \varphi^k\bigr\|^2\biggr)^{\frac{1}{2}}.
\end{align}
\begin{equation}
\label{diferente2}
\biggr(\Delta t\sum_{n=1}^M \bigl (I-P_X^r)Du^n\bigr\|^2\biggr)^{\frac{1}{2}}\!\!\le 
2c_m \biggl(\sum_{k>r} \sigma_k^2 \bigl\| \varphi^k\bigr\|^2\biggr)^{\frac{m-1}{2m}}\! \bigl\| (I-P_X^r)\partial_t u\bigr\|_{H^{m-1}(0,T,W)}^{\frac{1}{m}}.
\end{equation}
As in~Remark~\ref{re:periodic}, when~$u$ is periodic in~$t$ with period~$T$, then $c_m$ can be replaced by~$1$ and~$\bigl\| (I-P_X^r)\partial_t u\bigr\|_{H^{m-1}(0,T,W)}$ by~$\bigl\| (I-P_X^r)\partial_t^m u\bigr)\|_{L^2(0,T,W)}$.
\end{remark}

\subsection{Application to a POD method.}\label{se:application}
We now apply the previous results to obtain pointwise error bounds for a POD method applied to the heat equation where no DQ were included in the data set to obtain the POD basis $\{\varphi_1,\ldots,\varphi_J\}$. To discretize the time variable we consider two methods, the implicit Euler method and the second-order backward differentiation formula (BDF2). Analysis similar to part of computations done in the present section has been done before (see e.g., \cite[Lemma~3]{nos_ap_let}) but it is carried out here because better error constants are obtained.
%

Let us assume that $u$ is a semi-discrete FE approximation to the solution of the heat equation with a forcing term~$f$ so that $u(t)\in V$ for some FE space~$V$ of piecewise polynomials over a triangulation and satisfies
\begin{equation}
\label{heat_FEM}
(\partial_t u ,\varphi) + \nu(\nabla u,\nabla \varphi) = (f,\varphi), \quad \forall \varphi \in V,
\end{equation}
where~$\nu>0$ is the thermal diffusivity and, in this section, $(\cdot,\cdot)$ denotes the standard inner product of~$L^2(\Omega)$.
We consider the~POD method
\begin{equation}
\label{heat_POD}
(Du_r^n,\varphi) + \nu (\nabla u_r^n ,\nabla\varphi) = (f,\varphi),\qquad\forall \varphi\in V_r, \qquad n=1,\ldots, M,
\end{equation}
with
\begin{equation}
\label{heat_condi}
u_r^0=R_ru(\cdot,0),
\end{equation}
where $R_r$ denotes the Ritz projection,
$$
(\nabla R_r u,\nabla \varphi) = (\nabla u,\nabla \varphi),\qquad \forall \varphi\in V_r.
$$
Instead of using the first-order convergent Euler method considered above we may use the BDF2, with gives the
the following method
\begin{equation}
\label{heat_POD2}
({\cal D}u_r^n,\varphi) + \nu (\nabla u_r^n ,\nabla\varphi) = (f,\varphi),\qquad\forall \varphi\in V_r, \qquad n=2,\ldots, M,
\end{equation}
where
\begin{equation}
\label{BDF2}
{\cal D} u_r^n = \frac{3}{2} D u_r^n - \frac{1}{2}Du_{n-1}, \qquad n=2,\ldots,M,
\end{equation}
and, for simplicity,
\begin{equation}
\label{BDF2_assumption}
u_r^n=P_X^r u^n,\qquad n=0,1.
\end{equation}
We now prove the following result.
\begin{Theorem}\label{th:heat} Let $X$ be $H^1_0$ and let $p=1$ in the case of method~(\ref{heat_POD}-\ref{heat_condi}) and $p=2$ otherwise. Assume that~$u\in H^{p+1}(0,T,L^2)\cap H^{m}(0,T,H^1_0)$ for some $m\ge 2$. Then, the following bound holds for $0\le n\le M$:
\begin{align}
\label{result}
\bigl\| u_r^n - u^n\bigr\| &\le 4C_P\sqrt{T}c_m  \biggl(\sum_{k>r} \sigma_k^2\biggr)^{\frac{1}{2}-\frac{1}{2m}}
  \bigl\| (I-P_X^r)\partial_t u\bigr\|_{H^{m-1}(0,T,H^1_0)}^{\frac{1}{m}}
  \nonumber\\
&{}+\sqrt{2}C_P c_Ac_m^{1/2} \biggl(\sum_{k>r} \sigma_k^2\biggr)^{\frac{1}{2}-\frac{1}{4m}}\bigl\| (I-P_X^r)\partial_t u\bigr\|_{H^{m-1}(0,T,H^1_0)}^{\frac{1}{2m}}
\nonumber
\\
&{}+ \frac{1}{\sqrt{T}}\biggl(\sum_{k>r} \sigma_k^2\biggr)^{\frac{1}{2}}
 + \sqrt{p+3}(\Delta t)^p\sqrt{T} \left\| \partial_t^{p+1}u\right\|_{L^2(0,T,L^2)}.
\end{align}
\end{Theorem}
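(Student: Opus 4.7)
The plan is to decompose the error via $u_r^n - u^n = \theta^n + \rho^n$, where $\theta^n = u_r^n - P_X^r u^n \in V_r$ and $\rho^n = P_X^r u^n - u^n$. Because $X = H^1_0$ makes $P_X^r$ coincide with the Ritz projection $R_r$, the initial condition~\eqref{heat_condi} gives $\theta^0 = 0$, and the BDF2 initialization~\eqref{BDF2_assumption} additionally yields $\theta^1 = 0$. For the pointwise projection error $\|\rho^n\|_{L^2}$ I would apply Theorem~\ref{th:2} to control $\max_n\|\rho^n\|_X = \max_n\|\nabla\rho^n\|_{L^2}$ and then invoke the Poincar\'e inequality to pass to $L^2$; this produces the second and third summands on the right-hand side of~\eqref{result}.

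For $\theta^n$, subtracting the POD scheme~\eqref{heat_POD} from~\eqref{heat_FEM} tested on $\varphi \in V_r$, and using that the Ritz projection satisfies $(\nabla\rho^n,\nabla\varphi) = 0$ for $\varphi\in V_r$ (which kills the projection contribution to the diffusive term), the Euler error equation reduces to
\begin{equation*}
(D\theta^n,\varphi) + \nu(\nabla\theta^n,\nabla\varphi) = ((I-P_X^r)Du^n,\varphi) - (Du^n - \partial_t u^n,\varphi),\qquad \varphi\in V_r,
\end{equation*}
where I also use the commutation $DP_X^r = P_X^r D$ to identify $D\rho^n = -(I-P_X^r)Du^n$; the BDF2 case is analogous with $\mathcal{D}$ in place of $D$ and $(I-P_X^r)\mathcal{D}u^n$ on the right. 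Testing with $\varphi = \theta^n$, discarding the nonnegative $\nu\|\nabla\theta^n\|^2$ term, and invoking $(D\theta^n,\theta^n) \ge \tfrac{1}{2\Delta t}(\|\theta^n\|^2 - \|\theta^{n-1}\|^2)$ (Euler) or the G-stability identity $(\mathcal{D}\theta^n,\theta^n) \ge \tfrac{1}{4\Delta t}(E^n - E^{n-1})$ with $E^n = \|\theta^n\|^2 + \|2\theta^n - \theta^{n-1}\|^2$ (BDF2), I obtain an inequality of the form $\|\theta^n\|^2 - \|\theta^{n-1}\|^2 \le 2\Delta t\,R_n\|\theta^n\|$ with $R_n = \|(I-P_X^r)Du^n\| + \|Du^n - \partial_t u^n\|$, and the analogue for $\sqrt{E^n}$ in the BDF2 case.

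Factoring $\|\theta^n\|^2 - \|\theta^{n-1}\|^2 = (\|\theta^n\|-\|\theta^{n-1}\|)(\|\theta^n\|+\|\theta^{n-1}\|)$ yields the clean telescoping bound $\|\theta^n\| - \|\theta^{n-1}\| \le 2\Delta t\,R_n$; summing from $n=1$ with $\theta^0 = 0$ and applying the discrete Cauchy-Schwarz inequality in time gives
\begin{equation*}
\|\theta^N\| \le 2\sqrt{T}\Bigl[\bigl(\Delta t\textstyle\sum_n\|(I-P_X^r)Du^n\|^2\bigr)^{1/2} + \bigl(\Delta t\textstyle\sum_n\|Du^n-\partial_t u^n\|^2\bigr)^{1/2}\Bigr].
\end{equation*}
The first summand is bounded through Poincar\'e and Theorem~\ref{th:3} (applied with $X=H^1_0$), yielding the $4C_P\sqrt{T}c_m$ contribution in~\eqref{result}, while a standard Taylor remainder estimate on $Du^n - \partial_t u^n$ (respectively $\mathcal{D}u^n - \partial_t u^n$) controls the second summand by $(\Delta t)^p\sqrt{T}\|\partial_t^{p+1}u\|_{L^2(0,T,L^2)}$. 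The hard part will be the BDF2 case: verifying the G-stability identity for $E^n$ so that $E^1 = 0$ propagates cleanly through the square-root telescoping, and tracking combinatorial constants carefully so that the factor $\sqrt{p+3}$ emerges uniformly in $p=1,2$; the Euler case is the routine template of this argument.
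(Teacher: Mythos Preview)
Your proposal is correct and follows essentially the same route as the paper: the same splitting $u_r^n-u^n=(u_r^n-P_X^ru^n)+(P_X^ru^n-u^n)$, the same error equation (the paper's $\tau^n$ is exactly your right-hand side), the same telescoping to reach $\|\theta^N\|\le 2\sqrt{T}(\Delta t\sum\|\tau^n\|^2)^{1/2}$, followed by Poincar\'e plus Theorems~\ref{th:2} and~\ref{th:3}, and for BDF2 the same $G$-stability functional (the paper's $\mathcal E_n=\tfrac12\sqrt{E^n}$) together with the reduction of $(I-P_X^r)\mathcal D u^n$ to first differences via $\|\mathcal D v^n\|\le\tfrac32\|Dv^n\|+\tfrac12\|Dv^{n-1}\|$. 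Your factorization $\|\theta^n\|^2-\|\theta^{n-1}\|^2=(\|\theta^n\|-\|\theta^{n-1}\|)(\|\theta^n\|+\|\theta^{n-1}\|)$ is a slightly cleaner packaging of the paper's two-case argument around~\eqref{evol_sq}--\eqref{evol_nosq}, but the content is identical.
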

\begin{proof}
Since the the POD basis~$\{\varphi_1,\ldots,\varphi_J\}$ has been computed with respect to the inner product in~$H^1_0$, we have $P_X^r=R_r$. We first analyze method~(\ref{heat_POD}--\ref{heat_condi}). We notice that
\begin{equation}
\label{heat_repre}
(DP_X^ru^n,\varphi) + \nu(\nabla P_X^r u^n,\nabla\varphi)=(f,\varphi) + (\tau^n,\varphi), \qquad \forall\varphi\in V_r,
\end{equation}
where
$$
\tau^n = DP_X^r u^n - \partial_tu(\cdot,t_n),\qquad n=1,\ldots,M.
$$
Subtracting \eqref{heat_repre} from~\eqref{heat_POD}, for the error
$$
e_r^n=u_r^n - P_X^r u^n,\qquad n=0,\ldots,M,
$$
we have the following relation,
\begin{equation}
\label{heat_error}
(De_r^n,\varphi) + \nu (\nabla e_r^n ,\nabla\varphi) = (\tau^n,\varphi),\qquad\forall \varphi\in V_r, \qquad n=1,\ldots, M.
\end{equation}
Taking $\varphi=\Delta t e_r^n$ and using that $(e_r^n - e_r^{n-1},e_n) = (\bigl\| e_r^n \bigr\|^2 - \bigl\| e_r^{n-1} \bigr\|^2 + \bigl\| e_r^n-e_r^{n-1} \bigr\|^2)/2$, one obtains
\begin{equation}
\label{evol_sq}
\bigl\| e_r^n \bigr\|^2 - \bigl\| e_r^{n-1} \bigr\|^2 +2\nu\bigl\| \nabla e_r^n\bigr\|^2  \le 2\bigl\|\tau^n\bigr\| \bigl\|e_r^n\bigr\|,
\end{equation}
If $\bigl\| e_r^n\bigr\|> \bigl\| e_r^{n-1}\bigr\|$, then 
$-1< -\bigl\| e_r^{n-1}\bigr\|/\bigl\| e_r^n\bigr\|$. Hence, dividing both sides of~\eqref{evol_sq} by~$\bigl\| e_r^n\bigr\|$ we get
\begin{equation}
\label{evol_nosq}
\bigl\| e_r^n \bigr\| - \bigl\| e_r^{n-1} \bigr\|  \le 2\bigl\|\tau^n\bigr\|.
\end{equation}
 If, on the contrary, $\bigl\| e_r^n\bigr\|\le \bigl\| e_r^{n-1}\bigr\|$, for the right-hand side in~\eqref{evol_sq} we write $\bigl\|\tau^n\bigr\| \bigl\|e_r^n\bigr\|\le \bigl\|\tau^n\bigr\|( \bigl\|e_r^n\bigr\| +
\bigl\|e_r^{n-1}\bigr\|)/2$
so that dividing both sides of~\eqref{evol_sq} by~$\bigl\| e_r^n\bigr\| +
\bigl\|e_r^{n-1}\bigr\|$, we also obtain~\eqref{evol_nosq}.
Summing from~$n=1$ to~$n=m$ and using~\eqref{heat_condi}
we obtain
\begin{equation}
\label{evol_sum}
\bigl\| e_r^m \bigr\| \le 2\Delta t\sum_{n=1}^m \bigl\| \tau^n\bigr\| \le 2\sqrt{T} \biggl(\Delta t\sum_{n=1}^m \bigl\| \tau^n\bigr\|^2\biggr)^{\frac{1}{2}},
\end{equation}
where, in the last step we have applied H\"older's inequality.

With respect to~$\tau^n$, by adding $\pm Du^n$ we have
$$
\bigl\| \tau_n\bigr\| \le \bigl\| (I-P_X^r) Du^n\bigr\| + \big\| Du^n- \partial_t u(\cdot,t_n)\bigr\|.
$$
If $u\in H^2(0,T,L^2)$ Taylor expansion with integral reminder allow us to write
\begin{align*}
\big\| Du^n- \partial_t u(\cdot,t_n)\bigr\| &\le \frac{1}{\Delta t}\biggl\| \int_{t_{n-1}}^{t_n} \!\!(t-t_{n-1}) \partial^2_{t} u(\cdot,t)\,\di t\biggr\|
 \le 
\sqrt{\Delta t}
\bigl\| \partial^2_t u\bigr\|_{L^2(t_{n-1},t_n,L^2)}.
\end{align*}
Thus, from~\eqref{evol_sum} we get
$$
\bigl\| e_r^m \bigr\|\le 2\sqrt{T} \biggl(\Delta t\sum_{n=1}^m\bigl\| (I-P_X^r)D u^n\bigr\| ^2\biggr)^{\frac{1}{2}} + 2\Delta t\sqrt{T} \bigl\| \partial_t^2u\bigr\|_{L^2(0,T,L^2)}.
$$
Using Poincar\'e inequality
\begin{equation}
\label{Poincare}
\bigl\| v\bigr\| \le C_P \bigl\| \nabla v\bigr\|,\qquad \forall v\in H^1_0(\Omega),
\end{equation}
and applying~Theorem~\ref{th:3} we have
\begin{align*}
\max_{0\le n \le M} \bigl\| e_r^n\bigr\| \le& 4c_mC_P\sqrt{T}
 \biggl(\sum_{k>r} \sigma_k^2\biggr)^{\frac{m-1}{2m}}
  \bigl\| (I-P_X^r)\partial_t u\bigr\|_{H^{m-1}(0,T,H^1_0)}^{\frac{1}{m}}
  \nonumber\\
&{} + 2\Delta t\sqrt{T} \left\| \partial_t^2u\right\|_{L^2(0,T,L^2)}.
\end{align*}
Finally, to estimate the errors $u^n-u_r^n$ we write $u_r^n - u^n = e_r^n + P_X^r u^n-u^n$, and apply~\eqref{Poincare} and~Theorem~\ref{th:2}
to conclude~\eqref{result}.

The analysis of method~(\ref{heat_POD2}--\ref{BDF2_assumption}) is very similar to that of~\eqref{heat_POD}, and we comment on the differences next.
As it is well-known (and a simple calculation shows) $({\cal D}e_r^n,e_r^n)={\cal E}_n^2 - {\cal E}_{n-1}^2$, where
$$
{\cal E}_n =\frac{1}{2} \left( \bigl\| e_r^n\bigr\|^2 + \bigl\| 2e_r^n - e_r^{n-1}\bigr\|^2\right)^{1/2}, \quad n=1,\ldots,M.
$$
It is easy to check that $\left\| e_r^n \right\| \le {\cal E}_n$, so that one can repeat (with obvious changes) the analysis above with the implicit Euler method with $\left\| e_r^n\right\|$ replaced by ${\cal E}_n$ to reach
\begin{equation}
\label{evol_sum2}
\bigl\| e_r^m \bigr\| \le {\cal E}_m \le \Delta t\sum_{n=2}^m \bigl\| {\cal T}^n\bigr\| \le \sqrt{T} \biggl(\Delta t\sum_{n=2}^m \bigl\| {\cal T}^n\bigr\|^2\biggr)^{\frac{1}{2}},
\end{equation}
instead of~\eqref{evol_sum}, where  we have used that ${\cal E}_1=0$ due to~\eqref{BDF2_assumption},
and where
$$
{\cal T}^n = P_X^r {\cal D} u^n -  \partial_t u(\cdot,t_n) = (P_X^r-I) {\cal D} u^n + {\cal D} u^n - \partial_t u(\cdot, t_n).
$$
For the last term above we have
(see e.g., \cite[Lemma~4]{nos_ap_let})
$$
\bigl\| {\cal D} u^n - \partial_t u(\cdot, t_n)\bigr\| \le \sqrt{5}(\Delta t)^{3/2} \biggl(\int_{t_{n-2}}^{t_n}
\bigl\| \partial_{t}^3 u(\cdot,t)\bigr\|^2\,\di t\biggr)^{\frac{1}{2}}.
$$
Consequently, from~\eqref{evol_sum2} and noticing that $\bigl\| {\cal D}e_r^n\bigr\| \le (3/2)\bigl\| De_r^n\bigr\| + (1/2)\bigl\| De_r^{n-1}\bigr\|$, we have
\begin{equation}
\label{usada}
\bigl\| e_r^m\bigr\| \le 2\sqrt{T} \biggl(\Delta t\sum_{n=1}^m \bigl\| (I-P_X^r) Du^n\bigr\|^2\biggr)^{1/2} + \sqrt{5T} (\Delta t)^2 \bigl\| \partial_t^3 u\bigr\|_{L^2(0,T,L^2)},
\end{equation}
so that applying~\eqref{Poincare} and~Theorem~\ref{th:3} it follows that
\begin{align*}
\max_{0\le n \le M} \bigl\| e_r^n\bigr\| \le& 4c_mC_P\sqrt{T}
 \biggl(\sum_{k>r} \sigma_k^2\biggr)^{\frac{m-1}{2m}}
  \bigl\| (I-P_X^r)\partial_t u\bigr\|_{H^{m-1}(0,T,H^1_0)}^{\frac{1}{m}}
  \nonumber\\
&{} + (\Delta t)^2\sqrt{5T} \left\| \partial_t^3u\right\|_{L^2(0,T,L^2)},
\end{align*}
and, for the error~$u_r^n - u = e_r^n + (I-P_X^r) u_n$, applying~Theorem~1 we get~\eqref{result}.
\end{proof}


\begin{remark}\label{re:diferente_POD}\rm In view of Remark~\ref{re:diferente}, in the above estimates we may replace the Poincar\'e constant $C_P$ by~$1$,
$\sigma_k^2$ by~$\sigma_k^2\bigl\| \varphi_k\bigr\|^2$ and the norm $\left\|\cdot\right\|_{H^{m-1}(0,T,H^1_0)}$ by~$\left\|\cdot\right\|_{H^{m-1}(0,T,L^2)}$.
Also in view of~Remark~\ref{re:periodic}, when $u$ is periodic in~$t$ with period~$T$, the constant $c_m$ can be replaced by~$1$ and
$\bigl\| (I-P_X^r)\partial_t u\bigr\|_{H^{m-1}(0,T,H^1_0)}$ by $\bigl\| (I-P_X^r)\partial_t^m u\bigr\|_{L^2(0,T,H^1_0)}$ (or
by~$\bigl\| (I-P_X^r)\partial_t^m u\bigr\|_{L^2(0,T,L^2)}$ if we use the
estimates in~Remark~\ref{re:diferente}).
\end{remark}

\begin{remark} \label{re:Euler1}\rm  If, for the method~\eqref{heat_POD2}, $u_r^1$ is computed from $u_r^0$ with one step of the implicit Euler method, instead of setting~$u_r^1=P_X^r u^1$, then one can use a general stability result for the BDF2 like~\cite[Lemma~3]{nos_ap_let} and then, use standard estimates
to express the term $Du^1 - \partial_t u(\cdot, t_1)$ in~$\tau^1$ as
$
\big\| Du^1- \partial_t u(\cdot,t_n)\bigr\| \le 
{\Delta t}
\bigl\| \partial^2_t u\bigr\|_{L^\infty(0,t_1,L^2)}.
$
Thus, one gets estimates similar to~\eqref{result} but with different constants multiplying the different terms and $\bigl\| \partial_t^3u\bigr\|_{L^2(0,T,L^2)}$ replaced
by $\bigl\| \partial_t^3u\bigr\|_{L^2(0,T,L^2)} + \bigl\| \partial^2_t u\bigr\|_{L^\infty(0,t_1,L^2)}$.
\end{remark}

\section{Numerical experiments}
\label{se:numer}
We now check some of the estimates of previous sections. For the snapshots, we consider the semi-discrete piecewise quadratic FE approximation over a uniform $80\times 80$ triangulation (with diagonals running southwest-northeast) of the unit square of the stable periodic orbit of the following system:
\begin{equation}
\label{bruss}
\begin{array}{rclcl}
u_t&=&\nu\Delta u +1 +u^2v  - 4 u,&\qquad& (t,x)\in \Omega \times (0,T],\\
v_t &=&\nu \Delta v+3 u -u^2v,& \qquad
& (t,x)\in \Omega \times (0,T],\\
&&u(x,t)=1, \quad v(x,t) =3,&& (t,x)\in \Gamma_1\times(0,T],\\
&&\partial_n u(x,t)=\partial_n v(t,x) =0,&&  (x,t)\in \Gamma_2\times(0,T],\\
\end{array}
\end{equation}
where $\nu=0.002$, $\Omega=[0,1]\times[0,1]$, $\Gamma_1\subset\partial\Omega$ is the union of of sides
$\{x=1\}\bigcup \{ y=1\}$, and~$\Gamma_2$ is the rest of~$\partial\Omega$ (see \cite{nos_semi} for details). The period is~$T=7.090636$ (up to 7 significant digits).
This system is particularly challenging for the size of its time derivatives, which, as it can be seen in Fig.~\ref{fig:derivs}, they vary considerably over one period.
\begin{figure}[h]
\begin{center}
\includegraphics[height=2.3truecm]{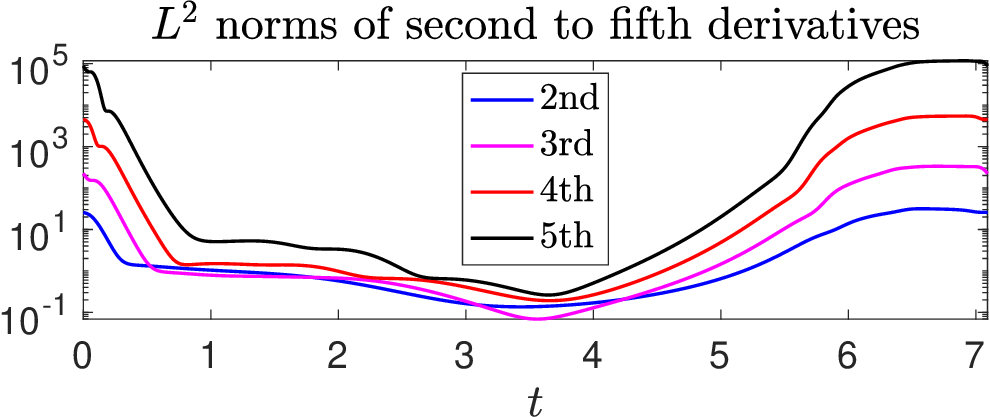}
\begin{caption}{\label{fig:derivs}  $L^2$ norms of the time derivatives of the finite element approximation $\bu_h$ to the periodic orbit solution of~\eqref{bruss}.}
\end{caption}
\end{center}
\end{figure}

In what follows, $M=256$. We took values $\bu(\cdot, t_n)=[u(\cdot,t_n),v(\cdot, t_n)]^T$, $n=0,\ldots,M$, and, in order to have~snapshots taking value zero on~$\Gamma_1$ we subtracted their mean so that $\bu^n = \bu(\cdot,t_n) - m_0$, where $m_0=(\bu(\cdot,t_1)+\cdots+\bu(\cdot,t_M))/M$ (notice that being $\bu$ periodic with period~$T$, there is no need to include~$\bu(\cdot,0)$ in the data set to obtain the POD basis, since $\bu(\cdot,0)=\bu(\cdot,T)$). 

We first check estimate~\eqref{esti1} for $X=L^2$ and two values of~$r$, $r=25$ and~$r=32$. The value of $r=25$ was chosen because it was the first value of~$r$ for which $\gamma_r$ in~\eqref{values1_sigmas} was below~$5.35\times 10^{-5}$, which, as explained in~\cite{nos_semi}, is the maximum error of the snapshots with respect the true solution of~\eqref{bruss} (recall that the snapshots are a FE approximations, not the true solution). The value~$r=32$ was chosen for comparison.
 We have the following values for the left-hand side and tail of the singular values
\begin{equation}
\label{values1_max}
\max_{1\le n\le M} \bigl\| (I-P_X^r)\bu^n\bigr\|_X =\left\{\begin{array}{rcr} 1.14\times 10^{-4},&\ &r=25,\\ 1.00\times10^{-5},&& r=32,\end{array},\right.
\end{equation}
\begin{equation}
\label{values1_sigmas}
\gamma_r\equiv \biggl(\sum_{k>r} \sigma_k^2\biggr)^{1/2} =
\left\{\begin{array}{rcr} 4.40\times 10^{-5},&\ &r=25,\\ 6.42\times10^{-6},&& r=32,\end{array}\right.
\end{equation}
and Table~\ref{table:1} shows the right-hand side of~\eqref{esti1}. We notice that there is a slight overestimation by a factor of less than~$9$.
\begin{table}[h]
\begin{center}
$$
\begin{array}{|c|c|c|c|c|c|}
\hline
m & 2& 3& 4& 5\\ \hline
r=25&     7.42\times 10^{-4}  &  7.42\times 10^{-4}  &     7.41\times 10^{-4}  &    7.43\times 10^{-4} \\ \hline
r=32 &    8.30\times 10^{-5}  &  8.37\times 10^{-5}  &    8.37\times 10^{-5}  &    8.68\times 10^{-5}\\ \hline
\end{array}
$$
\caption{\label{table:1} Right-hand side of
\eqref{esti1} for~$X=L^2$ and different values of~$m$.}
\end{center}
\end{table}

%

We now check estimates~\eqref{esti2} and~\eqref{diferente2} when $W=L^2(\Omega)$ and~$\left\|\cdot\right\|$ its norm, which have been used in~\eqref{usada} in the previous section to obtain estimate~\eqref{result} and in~Remark.~\ref{re:diferente_POD}.
We have the following values
\begin{equation}
\label{values2}
\biggr(\Delta t\sum_{n=1}^M \bigl\| (I-P_X^r)D\bu^n\bigr\|^2\biggr)^{\frac{1}{2}} =\left\{
\begin{array}{lcl} 9.28\times 10^{-3},& \ &r=25\\ 8.53\times 10^{-4},&& r=32.\end{array}\right.
\end{equation}
Notice that $X=H^1_0$ but we are measuring~$ (I-P_X^r)D\bu^n$ in the $L^2$ norm. 
Applying
Poincar\'e inequality~\eqref{Poincare} and~\eqref{esti2} the quantity above can be estimated by
\begin{equation}
\label{rho_m}
\rho_m \equiv 2C_P\biggl(\sum_{k>r} \sigma_k^2\biggr)^{\frac{1}{2}-\frac{1}{2m}} \bigl\| \nabla (I-P_X^r)\partial_t^m \bu\bigr\|_{L^2(0,T,L^2)}^{\frac{1}{m}}.
\end{equation}
On the other hand according~to~Remark~\ref{re:diferente_POD}, and taking into account that we are in the periodic case, it can also be estimated by
\begin{equation}
\label{mu_m}
\mu_m \equiv 2\biggl(\sum_{k>r} \sigma_k^2\bigl\|\varphi_k\bigr\|^2\biggr)^{\frac{1}{2}-\frac{1}{2m}} \bigl\| (I-P_X^r)\partial_t^m \bu\bigr\|_{L^2(0,T,L^2)}^{\frac{1}{m}}.
\end{equation}
Table~\ref{table:3} shows the corresponding values. Notice that $C_P=\lambda^{-1/2}$, where $\lambda$ is the smallest eigenvalue of the $-\Delta$ operator subject to
homogeneous Dirichlet boundary condition on~$\Gamma_1$ and homogeneous Neumann boundary condition on~$\Gamma_2$, and, thus, $C_P=\sqrt{2}/\pi$. By comparing the values in~Table~\ref{table:3} with those in~\eqref{values2}, we can see that while $\rho_m$ is not a good estimate, $\mu_m$ slightly overestimates the quantities in~\eqref{values2} by a factor that does not reach 8.5.
\begin{table}[h]
\begin{center}
$$
\begin{array}{|c|c|c|c|c|c|}
\hline
m & 2& 3& 4& 5\\ \hline
\rho_m (r=25)&     2.78\times 10^{-1}  &  3.42\times 10^{-1}  &     3.99\times 10^{-1}  &    5.44\times 10^{-1} \\ \hline
\rho_m (r=32)&     4.23\times 10^{-2}  &  5.29\times 10^{-2}  &     6.25\times 10^{-2}  &    1.04\times 10^{-1} \\ \hline
\mu_m (r=25)&     3.72\times 10^{-2}  &  5.23\times 10^{-2}  &     6.54\times 10^{-2}  &    7.72\times 10^{-2} \\ \hline
\mu_m (r=32)&     3.08\times 10^{-3}  &  4.09\times 10^{-3}  &     4.95\times 10^{-3}  &    5.76\times 10^{-3} \\ \hline
\end{array}
$$
\caption{\label{table:3} Estimates \eqref{rho_m} and~\eqref{mu_m} for~\eqref{values2}.}
\end{center}
\end{table}
The fact that $\mu_m$ is a much better estimate than~$\rho_m$ is explained in part by the size of the values $\bigl\|\varphi_k\bigr\|$ which, as shown in~Fig.~\ref{fig:normas}, they are small and well below the constant~$C_P$ (marked with a 
red 
dotted line in the plot).
\begin{figure}[h]
\begin{center}
\includegraphics[height=2.3truecm]{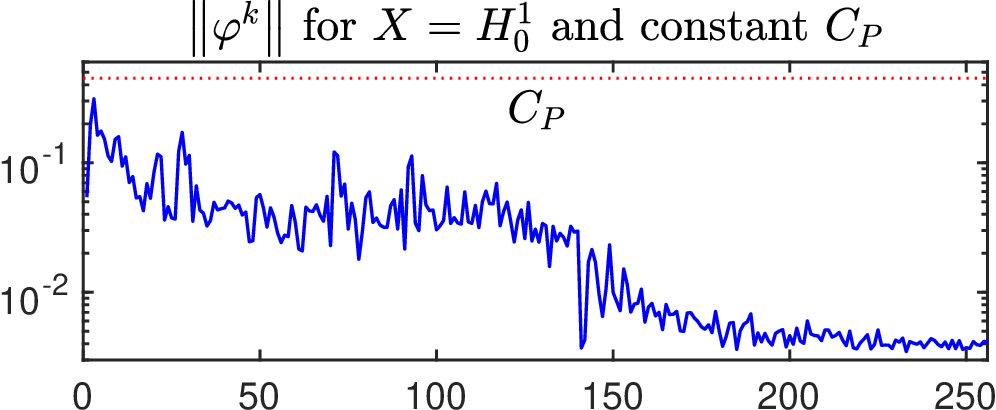}
\begin{caption}{\label{fig:normas}  $L^2$ norms of the elements~$\varphi^1,\ldots,\varphi^J$ of the POD basis for~$X=H^1_0$ and constant~$C_P$ in \eqref{Poincare}.}
\end{caption}
\end{center}
\end{figure}

\section{Abstract results}
\label{se:abstract}
In this section we state and prove a series of results which are needed to prove those in Section~\ref{se:main}. We do this in a more general setting.

Let $X$ be a Hilbert space with inner product $(\cdot,\cdot)$ and associated norm~$\bigl\|\cdot\bigr\|$.
In order to have shorter subindices in this section for $T>0$ and positive integer $M$ we denote
$$
\tau=T/M,
$$
instead of~$\Delta t$ as in previous sections. Thus, we write,
$t_n=n\tau$, $n=0,\ldots,M$. For a sequence $f_\tau=(f_n)_{n=0}^M$ in~$X$ we denote
$$
Df_n=D^1 f_n = \frac{f_n-f_{n-1}}{\tau },\qquad n=1,\ldots,M,
$$
and, for $k=2,\ldots\,J$,
$$
D^k f_n= \frac{D^{k-1} f_n - D^{k-1} f_{n-1}} {\tau},\qquad n=k,\ldots,J.
$$
(Notice the backward notation). Also, for simplicity we will use
$$
D^0 f_n=f_n,\qquad n=0,\ldots,M.
$$
We define
\begin{align}
\label{norma0}
\bigl\|f_\tau \bigr\|_{0}&=\biggl( \tau \sum_{n=0}^{M}\bigl\| f_n\bigr\|^2 \biggr)^{1/2},\\
\label{norma1}
\bigl\|Df_\tau \bigr\|_{0}&=\biggl( \tau \sum_{n=1}^{M}\bigl\| Df_n\bigr\|^2 \biggr)^{1/2},
\end{align}

The proof of the following lemma is the discrete counterpart of the following identity and estimate
$$
\left\|f(t_n)\right\|^2=\left\|f(t_m)\right\|^2 +2 \int_{t_m}^{t_n} (f(s),f'(s))\, \di s \le \left\|f(t_m)\right\|^2 +2 \left\| f\right\| \bigl\|f'\bigr\|.
$$

\begin{lema}\label{le:Agmond} Let $f_\tau=(f_n)_{n=0}^M$ a sequence in~$X$ satisfying that $f_0+\cdots+f_M=0$. Then
$$
\bigl\|f_n\bigr\| \le c_A\bigl\| f_\tau \bigr\|_0^{1/2} \bigl\| Df_\tau \bigr\|_0^{1/2},\qquad n=0,\ldots,M,
$$
where
\begin{equation}
\label{c_A}
c_A = \sqrt{2 + \sqrt{2}/2}.
\end{equation}
\end{lema}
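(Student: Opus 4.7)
The plan is to follow the continuous identity in the hint step by step, reducing first to the scalar case. Setting $u=f_n/\|f_n\|$ (the bound being trivial if $f_n=0$) and $g_m=(f_m,u)$, one obtains a scalar sequence with $\sum_m g_m=0$, $|g_n|=\|f_n\|$, and, by Cauchy--Schwarz on each term, $\|g\|_0\le\|f_\tau\|_0$ and $\|Dg\|_0\le\|Df_\tau\|_0$. It therefore suffices to prove
$$g_n^2\le c_A^2\,\|g\|_0\,\|Dg\|_0$$
for any scalar sequence with zero sum.

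The discrete counterpart of the cited identity is the telescoping
$$g_n^2-g_m^2=\pm\tau\sum_{k\in I_{m,n}}(g_k+g_{k-1})\,Dg_k,$$
where $I_{m,n}$ is the index interval between $m$ and $n$. Applying Cauchy--Schwarz together with the elementary bound $\tau\sum_{k=1}^M(g_k+g_{k-1})^2\le 4\|g\|_0^2$ yields $g_n^2\le g_m^2+2\|g\|_0\|Dg\|_0$ for \emph{every} index $m$. Averaging this inequality over $m=0,\ldots,M$ and using $\tau\sum_m g_m^2=\|g\|_0^2$ together with $(M+1)\tau\ge T$ gives
$$g_n^2\le\frac{\|g\|_0^2}{T}+2\|g\|_0\|Dg\|_0.$$

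Up to this point the mean-zero hypothesis has not been used, and it is precisely what is needed to control the first term on the right. The missing piece is the sharp discrete Poincar\'e-type inequality
$$\|g\|_0\le\frac{T\sqrt{2}}{2}\,\|Dg\|_0,$$
which is equivalent to $\|g\|_0^2/T\le(\sqrt{2}/2)\|g\|_0\|Dg\|_0$ and, when inserted in the previous display, produces the claimed bound $g_n^2\le(2+\sqrt{2}/2)\|g\|_0\|Dg\|_0=c_A^2\|g\|_0\|Dg\|_0$.

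This last Poincar\'e bound is the main obstacle and the step that actually uses the hypothesis $\sum_m g_m=0$. An elementary approach I would pursue avoids spectral theory: writing $\delta_k=g_k-g_{k-1}$, the constraint solves for $g_0=-\frac{1}{M+1}\sum_{k=1}^M(M{+}1{-}k)\delta_k$, which gives the explicit representation $g_m=\sum_{k=1}^M c_{m,k}\delta_k$ with $c_{m,k}=k/(M+1)$ for $k\le m$ and $c_{m,k}=(k-M-1)/(M+1)$ for $k>m$. Cauchy--Schwarz yields $g_m^2\le\bigl(\sum_k c_{m,k}^2\bigr)\sum_k\delta_k^2$; summing over $m$ and swapping the order, a short computation shows $\sum_m c_{m,k}^2=k(M{+}1{-}k)/(M+1)$ and hence $\sum_{m,k}c_{m,k}^2=M(M+2)/6$. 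Translating back to the $\|\cdot\|_0$ norms (using $\sum_k\delta_k^2=\tau\|Dg\|_0^2$ and $\tau=T/M$) this gives $\|g\|_0^2\le\frac{T^2(M+2)}{6M}\|Dg\|_0^2$, and the elementary inequality $(M+2)/(6M)\le 1/2$ for every $M\ge1$ (with equality at $M=1$, which is precisely where the bound is sharp) produces exactly the constant $T\sqrt{2}/2$. Combining the two estimates and taking square roots then delivers the stated inequality with $c_A=\sqrt{2+\sqrt{2}/2}$.
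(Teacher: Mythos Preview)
Your proof is correct and arrives at exactly the same constant~$c_A$, but the route differs from the paper's in two places. First, after establishing $g_n^2\le g_m^2+2\|g\|_0\|Dg\|_0$, you \emph{average} over~$m$ to get the extra term~$\|g\|_0^2/T$; the paper instead takes $m$ to be the index where~$\|f_m\|$ is minimal, which gives $\|f_m\|\le T^{-1/2}\|f_\tau\|_0$ directly. Second, to absorb that extra term you prove an $\ell^2$-type discrete Poincar\'e inequality $\|g\|_0\le (T/\sqrt{2})\|Dg\|_0$ via an explicit representation of~$g_m$ in terms of the increments and a somewhat involved calculation of~$\sum_{m,k}c_{m,k}^2=M(M+2)/6$; the paper proves a \emph{pointwise} Poincar\'e $\|f_m\|\le\sqrt{T/2}\,\|Df_\tau\|_0$ for every~$m$ by the simpler device of writing $f_m=\frac{1}{M+1}\sum_{l\ne m}(f_m-f_l)$ together with $\|f_m-f_l\|\le\sqrt{|m-l|\tau}\,\|Df_\tau\|_0$. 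Both versions end with $\|f_m\|^2\le\frac{1}{\sqrt{2}}\|f_\tau\|_0\|Df_\tau\|_0$ for the relevant term, so the final constant coincides. The paper's argument is shorter and works directly in the Hilbert space (your preliminary reduction to scalars is valid but not needed, since every step you perform goes through verbatim in~$X$); your averaging trick is a nice alternative to choosing the minimizing index and your Poincar\'e bound is in fact sharper than the paper's pointwise one would be when summed.
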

\begin{proof} For $0\le m<n$, we write
\begin{align*}
\bigl\| f_n\bigr\|^2 - \bigl\| f_m\bigr\|^2 &= \bigl\| f_n\bigr\|^2 - \bigl\| f_{n-1}\bigr\|^2 
  +\cdots + \bigl\| f_{m+1}\bigr\|^2 - \bigl\| f_m\bigr\|^2
\nonumber\\
 {}&= \sum_{j=m+1}^n \hbox{\rm Re}((f_j+f_{j-1}, f_j-f_{j-1}))
 \nonumber\\
  {}&\le \tau \sum_{j=m+1}^n \bigl\|f_j+f_{j-1}\bigr\| \bigl\| Df_j\bigr\|.
\end{align*}
Now applying H\"older's inequality and noticing that $\bigl\| a+b\bigr\|^2 \le 2\bigl\|a\bigr\|^2 + 2\bigl\| b\bigr\|^2$,  we have
\begin{equation}
\label{ag0}
\bigl\| f_n\bigr\|^2- \bigl\| f_m\bigr\|^2 \le \biggl(2\tau \bigl\| f_n\bigr\|^2 + 4\tau\sum_{j=m+1}^{n-1} \bigl\| f_j\bigr\|^2+2\tau \bigl\| f_m\bigr\|^2
 \biggr)^{1/2} \bigl\| D f_\tau\bigr\|_0,
\end{equation}
and, hence,
\begin{align}
\label{ag1}
 \bigl\| f_n\bigr\|^2- \bigl\| f_m\bigr\|^2&\le 2\bigl\| f_\tau\bigr\|_0 \bigl\| Df_\tau\bigr\|_0.
\end{align}
A similar argument also shows~\eqref{ag1} for $n<m\le M$.

Also, for $0\le l\le m$, we have $f_m =f_l + \tau (Df_{l+1} + \cdots + Df_m)$, so that applying H\"older's inequality it follows that
\begin{equation}
\label{ag2}
\bigl\| f_m - f_l \bigr\| \le \sqrt{\left|m-l\right|\tau} \bigl\| D_1^0 f_\tau\bigr\|_0.
\end{equation}
A similar argument shows that~\eqref{ag2} also holds for $m<l \le M$. Now, since we are assuming that $f_\tau$ is of zero mean, we can write
$$
f_m =f_m -\frac{1}{M+1}\sum_{l=0}^M f_l= \frac{1}{M+1}\sum_{l\ne m} (f_m -f_l),
$$
and, thus, taking norms, applying~\eqref{ag2} and~H\"older's inequality one gets
$$
\bigl\| f_m\bigr\| \le \frac{\sqrt{M}}{M+1} \biggl( \tau\sum_{l\ne m} \left|m-l\right|\biggr)^{1/2}\bigl\| Df_\tau\bigr\|_0.
$$
Now, a simple calculation shows that the sum above is at most $M(M+1)/2$, so that
\begin{equation}
\label{ag3}
\bigl\| f_m\bigr\| \le \sqrt{T/2} \bigl\| D f_\tau\bigr\|_0.
\end{equation}
We now take $m$ such that
\begin{equation}
\label{ag4}
\bigl\| f_m\bigr\| = \min_{0\le l\le M} \bigl\| f_l\bigr\|.
\end{equation}
We have that
\begin{equation}
\label{ag45}
\bigl\| f_m\bigr\| = \frac{\sqrt{T}}{\sqrt{T}}\bigl\| f_m\bigr\| = \frac{1}{\sqrt{T}} \left( \tau M\bigl\| f_m\bigr\|^2\right)^{1/2} \le \frac{1}{\sqrt{T}} \bigl\| f_\tau\bigr\|_0.
\end{equation}
From this inequality and~\eqref{ag3} it follows that for $f_m$ satisfying~\eqref{ag4} we have
$$
\bigl\| f_m\bigr\|^2 \le \frac{1}{\sqrt{2}} \bigl\|f_\tau\bigr\|_0\bigl\| Df_\tau\bigr\|_0.
$$
This, together with~\eqref{ag1} finishes the proof.
\end{proof}

\begin{remark}\label{re:comillas} \rm Notice that \eqref{ag0} implies~\eqref{ag1} also in the case where $\left\| f_\tau\right\|$ is defined as
\begin{equation}
\label{comillas}
\bigl\|f_\tau \bigr\|_{0}=\biggl(\frac{\tau}{2}\bigl\| f_0\bigr\|^2+ \tau \sum_{n=1}^{M-1}\bigl\| f_n\bigr\|^2+ \frac{\tau}{2}\bigl\| f_M\bigr\|^2 \biggr)^{1/2},
\end{equation}
and that \eqref{ag45} also holds in this case. Thus, Lemma~\ref{le:Agmond} is valid if $\left\| f_\tau\right\|$ is defined as in~\eqref{comillas}.
\end{remark}

We now treat separately the periodic case and the general case.

\subsection{The periodic case}
\label{se:periodic}

In this subsection we assume that $f_M=f_0$ and we extend periodically the sequence $f_\tau$, that is $f_{m} = f_{M+m}$ for $m=\pm 1,\pm 2,\ldots\,$. Also,
we will consider~$\left\| f_\tau\right\|_0$ as defined in~\eqref{comillas}. We
define~$\bigl\|D^{k} f_\tau\bigr\|_{0}$ as
\begin{equation}
\label{normap}
\bigl\| D^kf_\tau\bigr\|_{0} =\biggl(\tau\sum_{n=1}^M \bigl\| D^k f_n\bigr\|^2\biggr)^{1/2},\quad k=2,\ldots,M.
\end{equation}
Notice that for $k=1$ the expression above coincides with $\left\| D f_\tau\right\|$ defined in~\eqref{norma1}. Also,  since  $f_0=f_M$, the
expression~in~\eqref{normap} for  $k=0$  coincides with
the alternative definition of~$\left\| f_\tau\right\|$ given in~\eqref{comillas}, for which, as observed in~Remark~\ref{re:comillas} 
 Lemma~\ref{le:Agmond} is also valid. 
 
 The proof of the following lemma is the discrete counterpart of integration by parts in a periodic function
 $$
 \bigl\| f'\bigr\|^2 = \int_0^T (f'(t),f'(t))\,\di t = -\int_0^T (f(t),f''(t))\,\di t \le \left\| f\right\| \bigl\| f''\bigr\|.
 $$
 
\begin{lema}\label{le:tresd}
Let $f_\tau=(f_n)_{n\in{\mathbb Z}}$ be a periodic sequence in~$X$ with period $M$. Then, for $k=2,\ldots,M-1$,
$$
\bigl\| D^kf_\tau\bigr\|_0 \le \bigl\|D^{k-1} f_\tau \bigr\|_0^{1/2}  \bigl\|D^{k+1} f_\tau \bigr\|_0^{1/2}
$$
\end{lema}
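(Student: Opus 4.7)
The plan is to mimic the continuous identity $\|f'\|^2 = -\int_0^T (f, f'') \, \di t$ via discrete summation by parts, using periodicity of $D^j f_\tau$ for every $j \ge 0$ (which is immediate from the fact that $f_\tau$ is periodic). The target inequality, squared, reads $\|D^k f_\tau\|_0^2 \le \|D^{k-1} f_\tau\|_0 \|D^{k+1} f_\tau\|_0$, so after the summation by parts a single Cauchy--Schwarz in the $\|\cdot\|_0$ inner product should finish the job.

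Concretely, first I would rewrite one of the two factors $D^k f_n$ in $\|D^k f_\tau\|_0^2 = \tau \sum_{n=1}^M (D^k f_n, D^k f_n)$ using the defining identity $\tau D^k f_n = D^{k-1} f_n - D^{k-1} f_{n-1}$, obtaining
\begin{equation*}
\|D^k f_\tau\|_0^2 = \sum_{n=1}^M (D^k f_n, D^{k-1} f_n) - \sum_{n=1}^M (D^k f_n, D^{k-1} f_{n-1}).
\end{equation*}
Then I would shift the index in the second sum ($n \mapsto n+1$) and use the periodicity $(D^k f_1, D^{k-1} f_0) = (D^k f_{M+1}, D^{k-1} f_M)$ to replace $\sum_{n=0}^{M-1}$ by $\sum_{n=1}^M$, so that the two sums combine into
\begin{equation*}
\|D^k f_\tau\|_0^2 = \sum_{n=1}^M (D^k f_n - D^k f_{n+1}, D^{k-1} f_n) = -\tau \sum_{n=1}^M (D^{k+1} f_{n+1}, D^{k-1} f_n),
\end{equation*}
using $D^k f_{n+1} - D^k f_n = \tau D^{k+1} f_{n+1}$.

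Finally, the discrete Cauchy--Schwarz inequality gives
\begin{equation*}
\|D^k f_\tau\|_0^2 \le \Bigl(\tau \sum_{n=1}^M \|D^{k+1} f_{n+1}\|^2\Bigr)^{1/2} \Bigl(\tau \sum_{n=1}^M \|D^{k-1} f_n\|^2\Bigr)^{1/2},
\end{equation*}
and a second appeal to periodicity (shifting $n \mapsto n-1$ in the first factor) turns the two factors into $\|D^{k+1} f_\tau\|_0$ and $\|D^{k-1} f_\tau\|_0$, respectively. For $k = 1$ this last identification uses the definition~\eqref{comillas} together with $f_0 = f_M$, as noted in Remark~\ref{re:comillas}; for $k \ge 2$ it is direct from~\eqref{normap}. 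The only delicate point, which I expect to be the main obstacle in writing this out cleanly, is keeping the index bookkeeping consistent when invoking periodicity twice: once to identify the two shifted summations during summation by parts, and once to identify the shifted $\ell^2$-norm of $D^{k+1} f_\tau$ with the unshifted one.
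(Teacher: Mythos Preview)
Your proposal is correct and follows essentially the same route as the paper: rewrite one factor of~$D^k f_n$ as a difference of~$D^{k-1}$, sum by parts using periodicity to arrive at the identity $\|D^k f_\tau\|_0^2 = -\tau\sum_{n=1}^M (D^{k+1} f_{n+1}, D^{k-1} f_n)$, and then apply Cauchy--Schwarz (the paper says H\"older) together with periodicity of the shifted sum. The only cosmetic difference is that the paper organizes the summation by parts via Abel telescoping (writing the boundary terms explicitly and then cancelling them by periodicity), whereas you obtain the same cancellation by splitting into two sums and shifting the index in one of them.
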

\begin{proof} Since $\tau \bigl\| D^k f_n\bigr\|^2=\tau (D^k f_n, D^k f_n)= (D^k f_n,D^{k-1} f_n - D^{k-1} f_{n-1})$, we have
\begin{align*}
\bigl\| D^kf_\tau\bigr\|_0^2 &= \sum_{n=1}^M (D^k f_n, D^{k-1} f_n - D^{k-1} f_{n-1})
\nonumber\\
&{}= (D^kf_M ,D^{k-1}f_M) - (D^kf_1,D^{k-1} f_{0})
- \sum_{n=1}^{M-1} (D^k f_{n+1} - D^kf_n, D^{k-1}f_n).
\end{align*}
Now we notice that for the second term on the right-hand side above we have  $(D^kf_1,D^{k-1} f_{0})=(D^kf_{M+1},D^{k-1} f_{M})$, and thus
$$
\bigl\| D^kf_\tau\bigr\|^2 =  - \sum_{n=1}^{M} (D^k f_{n+1} - D^kf_n, D^{k-1}f_n) =-\tau  \sum_{n=1}^{M} (D^{k+1} f_{n+1} , D^{k-1}f_n)
$$
and the proof is finished by applying H\"older's inequality.
\end{proof}

The proof of the following reuslt follows the ideas in the proof of~\cite[Lemma~4.12]{Adams} adapted to the discrete case.
\begin{lema}\label{le:uno_m}
 Let $f_\tau=(f_n)_{n\in{\mathbb Z}}$ a periodic sequence in~$X$ with period $M$. Then, for $m=2,\ldots,M$,
\begin{equation}
\label{uno_m}
\bigl\| D^1 f_\tau\bigr\|_0 \le \bigl\|f_\tau\bigr\|_0^{\frac{m-1}{m}} \bigl\| D^mf_\tau\bigr\|_0^{\frac{1}{m}}.
\end{equation}
\end{lema}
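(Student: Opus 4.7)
The plan is to prove \eqref{uno_m} by induction on $m\ge 2$, viewing it as the discrete counterpart of the Gagliardo--Nirenberg-type interpolation estimate $\|f'\|_{L^2}\le \|f\|_{L^2}^{(m-1)/m}\|f^{(m)}\|_{L^2}^{1/m}$, whose standard proof (e.g.\ Adams) is itself inductive and uses only the three-term inequality $\|f^{(k)}\|^2\le \|f^{(k-1)}\|\,\|f^{(k+1)}\|$ as its engine. That engine is exactly Lemma~\ref{le:tresd}.

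For the base case $m=2$, I need the inequality $\|D^1 f_\tau\|_0^2\le \|f_\tau\|_0\,\|D^2 f_\tau\|_0$. This is not quite a literal instance of Lemma~\ref{le:tresd} (which is stated for $k\ge 2$), but its proof carries over verbatim to $k=1$ in the periodic setting: starting from $\|D^1 f_\tau\|_0^2=\sum_{n=1}^M (D^1 f_n,\, f_n-f_{n-1})$, summation by parts produces boundary terms $(D^1 f_M,f_M)-(D^1 f_1,f_0)$, which cancel because periodicity identifies $(D^1 f_1,f_0)$ with $(D^1 f_{M+1},f_M)$ and thus absorbs them into the telescoping sum, yielding $\|D^1 f_\tau\|_0^2=-\tau\sum_{n=1}^M (D^2 f_{n+1},f_n)$; Cauchy--Schwarz then finishes it.

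For the inductive step, I would assume \eqref{uno_m} at level $m$ and apply it to the sequence $D^1 f_\tau$, which is itself periodic with period $M$ (since $D^1 f_{n+M}=D^1 f_n$ by periodicity of $f_\tau$). This produces
$$
\bigl\|D^2 f_\tau\bigr\|_0 \;=\; \bigl\|D^1(D^1 f_\tau)\bigr\|_0 \;\le\; \bigl\|D^1 f_\tau\bigr\|_0^{\frac{m-1}{m}} \bigl\|D^{m+1} f_\tau\bigr\|_0^{\frac{1}{m}}.
$$
Substituting this into the base case $\|D^1 f_\tau\|_0^2\le \|f_\tau\|_0\,\|D^2 f_\tau\|_0$ gives
$$
\bigl\|D^1 f_\tau\bigr\|_0^{2-\frac{m-1}{m}} \;\le\; \bigl\|f_\tau\bigr\|_0\,\bigl\|D^{m+1} f_\tau\bigr\|_0^{\frac{1}{m}},
$$
and since $2-(m-1)/m=(m+1)/m$, raising both sides to the power $m/(m+1)$ yields exactly \eqref{uno_m} at level $m+1$.

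The main obstacle is really the base case: once $a_1^2\le a_0 a_2$ is in hand, the inductive step is a routine algebraic manipulation. The only non-cosmetic point is justifying that Lemma~\ref{le:tresd}'s argument extends to $k=1$, which hinges on $D^0 f_n=f_n$ being well-defined for every $n$ and on periodicity producing the same boundary cancellation it produces for $k\ge 2$. Everything else, including checking that $D^1 f_\tau$ inherits periodicity so that the induction hypothesis can legitimately be applied to it, is straightforward.
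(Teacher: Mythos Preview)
Your proof is correct but follows a genuinely different inductive scheme from the paper's. The paper first establishes the auxiliary inequality
\[
\bigl\| D^{k} f_\tau\bigr\|_0 \le \bigl\|f_\tau\bigr\|_0^{\frac{1}{k+1}} \bigl\| D^{k+1}f_\tau\bigr\|_0^{\frac{k}{k+1}}
\]
by induction on~$k$ (each step uses Lemma~\ref{le:tresd} on $\|D^m f_\tau\|_0$ and then the induction hypothesis on the resulting $\|D^{m-1}f_\tau\|_0$ factor), and only afterwards chains these auxiliary bounds telescopically to reach~\eqref{uno_m}. You instead induct directly on~$m$ in~\eqref{uno_m}, invoking the hypothesis on the shifted sequence $D^1 f_\tau$ and feeding the result into the $m=2$ base case. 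Your route is shorter and avoids the intermediate family of inequalities; the paper's route, on the other hand, yields that intermediate family as a byproduct (though it is not used elsewhere). Both hinge on the same engine, the three-term bound of Lemma~\ref{le:tresd}, and your remark that its proof extends verbatim to $k=1$ is correct and is in fact exactly what the paper itself silently relies on when it writes ``\eqref{m-1_m} holds for $k=1$ as a direct application of Lemma~\ref{le:tresd}''. One cosmetic point: when you divide by $\|D^1 f_\tau\|_0^{(m-1)/m}$ you should note that the case $\|D^1 f_\tau\|_0=0$ is trivial.
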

\begin{proof}
We will prove below that~\eqref{uno_m} will be a consequence of the following inequality
\begin{equation}
\label{m-1_m}
\bigl\| D^{k} f_\tau\bigr\|_0 \le \bigl\|f_\tau\bigr\|_0^{\frac{1}{k+1}} \bigl\| D^{k+1}f_\tau\bigr\|_0^{\frac{k}{k+1}}, \qquad k=1,\ldots,M-1,
\end{equation}
which we will now proof by induction. Notice that \eqref{m-1_m} holds for $k=1$ as a direct application of~Lemma~\ref{le:tresd}. Now assume
that~\eqref{m-1_m} holds for $k=1,\ldots,m-1$, and let us check that it also holds for $k=m$. Applying Lemma~\ref{le:tresd} we have
$$
\bigl\| D^{m} f_\tau\bigr\|_0 \le \bigl\| D^{m-1} f_\tau\bigr\|_0^{1/2} \bigl\| D^{m+1}f_\tau\bigr\|_0^{1/2}.
$$
Applying the induction hypothesis we have
$$
\bigl\| D^{m} f_\tau\bigr\|_0 \le \bigl\| f_\tau\bigr\|_0^{\frac{1}{2m}}\bigl\| D^{m} f_\tau\bigr\|_0^{\frac{m-1}{2m}} \bigl\| D^{m+1}f_\tau\bigr\|_0^{1/2},
$$
so that dividing by~$\bigl\| D^{m} f_\tau\bigr\|_0^{\frac{m-1}{2m}}$ we obtain
$$
\bigl\| D^{m} f_\tau\bigr\|_0^{\frac{m+1}{2m}} \le \bigl\| f_\tau\bigr\|_0^{\frac{1}{2m}} \bigl\| D^{m+1}f_\tau\bigr\|_0^{1/2},
$$
from where~\eqref{m-1_m} follows.

Now, to proof~\eqref{uno_m}, we start with the case $m=2$, which holds as a direct application of Lemma~\ref{le:tresd} for $k=1$. Then we apply~\eqref{m-1_m}
repeatedly to the factor with the highest-order difference, that is
$$
\displaylines{
\bigl\| D^1 f_\tau\bigr\|_0 \le \bigl\|f\bigr\|^{\frac{1}{2}} \bigl\| D^2f_\tau\bigr\|_0^{\frac{1}{2}} \le \bigl\|f\bigr\|^{\frac{1}{2}+\frac{1}{6}}  \bigl\| D^3f_\tau\bigr\|_0^{\frac{1}{3}} 
\le  \bigl\|f\bigr\|^{\frac{1}{2}+\frac{1}{6} + \frac{1}{12}}  \bigl\| D^4f_\tau\bigr\|_0^{\frac{1}{4}}\le {} \cr
{} \ldots \le 
 \bigl\|f\bigr\|^{\frac{1}{2}+\frac{1}{6} +\cdots  \frac{1}{m(m-1)}}  \bigl\| D^mf_\tau\bigr\|_0^{\frac{1}{m}},
 }
 $$
 and the proof is finished by noticing that 
\begin{align*}
\frac{1}{2}+\frac{1}{6} +\cdots  \frac{1}{m(m-1)} &= \frac{1}{2} + \left( \frac{1}{2} - \frac{1}{3} \right) 
 + \left( \frac{1}{3} - \frac{1}{4} \right) 
+\cdots+
 \left( \frac{1}{m-1} - \frac{1}{m} \right)
 \nonumber\\
 {}&\le 1 - \frac{1}{m} = \frac{m-1}{m}.
 \end{align*}
 \end{proof}
 
From Lemmas~\ref{le:Agmond} and~\ref{le:uno_m} the following result follows.
\begin{lema}\label{le:esti_max}
 Let $f_\tau=(f_n)_{n\in{\mathbb Z}}$ a periodic sequence in~$X$ with period $M$ and zero mean. Then, for $k=1,\ldots,M$, the following estimate holds:
\begin{equation}
\label{est_max}
\bigl\| f_n\bigr\| \le c_A\bigl\|f_\tau\bigr\|_0^{1-\frac{1}{2m}} \bigl\| D^mf_\tau\bigr\|_0^{\frac{1}{2m}},
\end{equation}
where $c_A$ is the constant in~\eqref{c_A}.
\end{lema}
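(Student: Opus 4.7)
The plan is to combine the discrete Agmon inequality (Lemma~\ref{le:Agmond}) with the discrete interpolation inequality (Lemma~\ref{le:uno_m}) in an entirely mechanical way. Specifically, I would first apply Lemma~\ref{le:Agmond} to the sequence $f_\tau$ (which is of zero mean by hypothesis) to obtain
$$
\bigl\| f_n\bigr\| \le c_A \bigl\| f_\tau\bigr\|_0^{1/2} \bigl\| D f_\tau\bigr\|_0^{1/2},\qquad n=0,\ldots,M,
$$
using the alternative trapezoidal-type definition of $\left\|\cdot\right\|_0$ from~\eqref{comillas}, which is the one appropriate in the periodic setting and for which Lemma~\ref{le:Agmond} remains valid by Remark~\ref{re:comillas}.

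Next, since $f_\tau$ is periodic with period $M$, Lemma~\ref{le:uno_m} applies and yields
$$
\bigl\| D f_\tau\bigr\|_0 \le \bigl\| f_\tau\bigr\|_0^{\frac{m-1}{m}} \bigl\| D^m f_\tau\bigr\|_0^{\frac{1}{m}}.
$$
Substituting this estimate into the Agmon bound and simplifying the exponents gives
$$
\bigl\| f_n\bigr\| \le c_A \bigl\| f_\tau\bigr\|_0^{\frac{1}{2}+\frac{m-1}{2m}} \bigl\| D^m f_\tau\bigr\|_0^{\frac{1}{2m}} = c_A \bigl\| f_\tau\bigr\|_0^{1-\frac{1}{2m}} \bigl\| D^m f_\tau\bigr\|_0^{\frac{1}{2m}},
$$
since $\tfrac{1}{2}+\tfrac{m-1}{2m}=\tfrac{2m-1}{2m}=1-\tfrac{1}{2m}$, which is exactly~\eqref{est_max}.

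The only subtlety I expect is bookkeeping around the zero-mean hypothesis in the periodic setting: Lemma~\ref{le:Agmond} is stated under the condition $f_0+\cdots+f_M=0$, whereas for a periodic sequence with period $M$ the natural zero-mean condition is $f_1+\cdots+f_M=0$ (equivalently $f_0+\cdots+f_{M-1}=0$ using $f_0=f_M$). This is precisely the reason for using the trapezoidal norm~\eqref{comillas}: with the half weights on $f_0$ and $f_M$ the total weight on each value agrees with the periodic average and the proof of Lemma~\ref{le:Agmond} goes through unchanged, as noted in Remark~\ref{re:comillas}. Once this point is settled, the rest of the argument is just chaining the two previously proved lemmas with an elementary arithmetic of exponents.
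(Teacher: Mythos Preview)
Your proof is correct and matches the paper's approach exactly: the paper simply states that the result follows from Lemmas~\ref{le:Agmond} and~\ref{le:uno_m}, and your argument spells out precisely that chaining, including the exponent arithmetic. Your discussion of the zero-mean bookkeeping in the periodic setting (via the trapezoidal norm~\eqref{comillas} and Remark~\ref{re:comillas}) is more careful than the paper itself, which leaves this point implicit.
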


\subsection{The general case}

In this section, and unless stated otherwise, $f_\tau=(f_n)_{n=0}^M$ is a sequence in~$X$ with zero mean. We define
\begin{align*}
\bigl\| D^k f_\tau\bigl\|_0 &= \biggl(\tau \sum_{n=k}^M \bigl\| D^k f_n\bigr\|^2\biggr)^{1/2}, \qquad k=1,\ldots,M,
\nonumber\\ 
\end{align*}
and, in order to obtain scale invariant constants, for each $k=0,\ldots,M$,
\begin{equation}
\label{norma_mg}
\bigl\| D^k f_\tau\bigl\|_m = \biggl(\sum_{j=k}^{k+m} \frac{1}{T_j^{2(m+k-j)}}\bigl\| D^j f_\tau\bigr\|_0^2\biggr)^{1/2}, \qquad m=1,\ldots, M-k,
\end{equation}
where, here and in the sequel, $T_0=T$, and
\begin{equation}
\label{T_k}
T_k = (M+1-k) \tau,\qquad k=1,\ldots,M.
\end{equation}

We also denote
\begin{equation}
\label{m_k}
m_k=\frac{1}{M+1-k} \sum_{n=k}^M D^kf_n.
\end{equation}
Observe that by applying H\"older's inequality one gets
\begin{equation}
\label{norma_m_k}
\bigl\| m_k\bigr\| \le \frac{1}{\sqrt{T_k}} \bigl\| D^k f_\tau\bigr\|_0
\end{equation}
Consequently, for the sequence $D^k f_\tau -m_k = (D^k f_n -m_k)_{n=k}^M$, we have
\begin{equation}
\label{norma_0Dk-mk}
\bigl\| D^kf_\tau - m_k\bigr\|_0 \le 2\bigl\| D^k f_\tau\bigr\|_0.
\end{equation}

Our first result 
extends
Lemma~\ref{le:Agmond} to the sequences~$(D^kf_n)_{n=k}^M$, whose means are not 0.

\begin{lema}\label{le:AgmonDk} The following bound holds for $k=1,\ldots M$,
$$
\bigl\| D^k f_n\bigr\| \le c_{A,1} \bigl\| D^k f_\tau\bigr\|_0^{1/2}\bigl\| D^k f_\tau\bigr\|_1^{1/2},\qquad n=k,\ldots,M-1,
$$
where
\begin{equation}
\label{c_Ak}
c_{A,1} = \left(1 + (\sqrt{2}c_A)^{4/3} \right)^{3/4},
\end{equation}
and $c_A$ isn the constant in~\eqref{c_A}.
\end{lema}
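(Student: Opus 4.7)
The plan is to reduce to the zero-mean situation already handled by Lemma~\ref{le:Agmond}. The sequence $(D^k f_n)_{n=k}^M$ does not have zero mean in general, but $g_n := D^k f_n - m_k$ does, by the very definition of $m_k$ in~\eqref{m_k}. After reindexing $\tilde g_j = g_{j+k}$ for $j=0,\ldots,M-k$, the sequence $\tilde g$ lives on $M-k+1$ time levels with spacing $\tau$, has zero mean, and Lemma~\ref{le:Agmond} applies on this reduced grid. Moreover $D\tilde g_j = D g_{j+k} = D^{k+1}f_{j+k}$ (since $m_k$ is constant), so $\|D\tilde g_\tau\|_0 = \|D^{k+1}f_\tau\|_0$; and by \eqref{norma_0Dk-mk} we have $\|\tilde g_\tau\|_0 = \|D^k f_\tau - m_k\|_0 \le 2\|D^k f_\tau\|_0$. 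Combining these, Lemma~\ref{le:Agmond} yields, for every $n = k,\ldots,M$,
\begin{equation*}
\bigl\| D^k f_n - m_k\bigr\| \le \sqrt{2}\,c_A\,\bigl\|D^k f_\tau\bigr\|_0^{1/2}\bigl\|D^{k+1}f_\tau\bigr\|_0^{1/2}.
\end{equation*}

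Next, by the triangle inequality and \eqref{norma_m_k},
\begin{equation*}
\bigl\| D^k f_n\bigr\| \le \sqrt{2}\,c_A\,\bigl\|D^k f_\tau\bigr\|_0^{1/2}\bigl\|D^{k+1}f_\tau\bigr\|_0^{1/2} + \frac{1}{\sqrt{T_k}}\bigl\|D^k f_\tau\bigr\|_0.
\end{equation*}
Factoring out $\bigl\|D^k f_\tau\bigr\|_0^{1/2}$, what remains is to bound
\begin{equation*}
\bigl(a/T_k\bigr)^{1/2} + \sqrt{2}\,c_A\,b^{1/2} \qquad\text{where } a=\bigl\|D^k f_\tau\bigr\|_0,\ b=\bigl\|D^{k+1}f_\tau\bigr\|_0,
\end{equation*}
by a multiple of $\bigl(a^2/T_k^2 + b^2\bigr)^{1/4} = \bigl\|D^k f_\tau\bigr\|_1^{1/2}$ (recalling the definition~\eqref{norma_mg} in the case $m=1$).

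This last step is a straightforward application of H\"older's inequality for two-term sums with exponents $p=4$, $q=4/3$: for any nonnegative $\alpha,\beta,s,t$,
\begin{equation*}
\alpha s + \beta t \le \bigl(s^4 + t^4\bigr)^{1/4}\bigl(\alpha^{4/3} + \beta^{4/3}\bigr)^{3/4}.
\end{equation*}
Setting $s=(a/T_k)^{1/2}$, $t=b^{1/2}$, $\alpha=1$, $\beta=\sqrt{2}\,c_A$ yields exactly the constant $c_{A,1}$ in~\eqref{c_Ak}. The only place where care is needed is lining up the exponents so that the factor $(s^4+t^4)^{1/4}$ reconstitutes $\|D^k f_\tau\|_1^{1/2}$; everything else is either a direct citation of Lemma~\ref{le:Agmond} (after a harmless reindexing) or the bookkeeping inequalities \eqref{norma_m_k} and~\eqref{norma_0Dk-mk}. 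I expect no serious obstacle: the conceptual content is simply ``Agmon after subtracting the mean, plus Poincar\'e-type control of the mean by $\|D^k f_\tau\|_0/\sqrt{T_k}$, recombined via H\"older.''
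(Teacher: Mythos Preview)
Your proof is correct and follows essentially the same approach as the paper's: subtract the mean $m_k$ to apply Lemma~\ref{le:Agmond}, use \eqref{norma_0Dk-mk} and \eqref{norma_m_k} to control the pieces, then recombine via H\"older with exponents $4$ and $4/3$ to recover $\|D^k f_\tau\|_1^{1/2}$ and the constant $c_{A,1}$. The only differences are cosmetic (you spell out the reindexing and swap the roles of $p$ and $q$).
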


\begin{proof} Applying Lemma~\eqref{le:Agmond} we have
\begin{align}
\label{ag5}
\bigl\| D^kf_n - m_k\bigr\| &\le c_A  \bigl\| D^kf_\tau - m_k\bigr\|_0^{1/2} \bigl\| D^{k+1} f_\tau\bigr\|_0^{1/2}
\nonumber\\
&{} \le \sqrt{2}c_A  \bigl\| D^kf_\tau\bigr\|_0^{1/2} \bigl\| D^{k+1} f_\tau\bigr\|_0^{1/2},
\end{align}
where in the last inequality we have applied~\eqref{norma_0Dk-mk}. Thus, by writing $D^k f_n = D^k f_n -m_k + m_k$ and in view of~\eqref{norma_m_k}, we have
\begin{align*}
\bigl\| D^kf_n\bigr\| &\le \sqrt{2}c_A  \bigl\| D^kf_\tau\bigr\|_0^{1/2} \bigl\| D^{k+1} f_\tau\bigr\|_0^{1/2} +  T_k^{-1/2}\bigl\| D^k f_\tau\bigr\|_0
\nonumber\\
&{}=  \bigl\| D^kf_\tau\bigr\|_0^{1/2}\left(\sqrt{2}c_A \bigl\| D^{k+1} f_\tau\bigr\|_0^{1/2}+ T_k^{-1/2}\bigl\| D^k f_\tau\bigr\|_0^{1/2}\right).
\end{align*}
Now applying H\"older's inequality, $ab + cd \le (a^p+c^p)^{\frac{1}{p}} (b^q+d^q)^{\frac{1}{q}}$, to the second factor above, with $p=4/3$ and~$q=4$,
the proof is finished.
\end{proof}

Next, we extend Lemma~\ref{le:tresd} to the general case

\begin{lema}\label{le:tresdg}
For $k=1,\ldots,M-1$,
$$
\bigl\| D^kf_\tau\bigr\|_0 \le c_{B,1}\bigl\|D^{k-1} f_\tau \bigr\|_0^{1/2}  \bigl\|D^{k} f_\tau \bigr\|_1^{1/2},
$$
where 
\begin{equation}
\label{c_B}
c_{B,1} =2\left(1+ 2(c_Ac_{A,1})^2\right)^{1/2}.
\end{equation}
In the case $k=1$, $c_{B,1}$ can be replaced by $c_{B,1}/\sqrt{2}$.
\end{lema}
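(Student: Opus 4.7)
The strategy is to mirror the continuous integration-by-parts identity $\int_0^T(f',f')\,\di t = [(f',f)]_0^T - \int_0^T(f'',f)\,\di t$, combining Abel summation with the discrete Agmon inequality already proved in Lemma~\ref{le:AgmonDk}. The essential first move is to observe that $D^{k-1}f_\tau$ need not have zero mean, so one must work with the centered sequence $g_n = D^{k-1}f_n - m_{k-1}$ for $n=k-1,\ldots,M$. Then $g_n - g_{n-1} = \tau D^k f_n$, the mean $m_{k-1}$ cancels in every discrete difference, and Lemma~\ref{le:Agmond} becomes directly applicable to $g_\tau$.

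Starting from $\bigl\|D^k f_\tau\bigr\|_0^2 = \sum_{n=k}^{M}(D^k f_n, g_n - g_{n-1})$, Abel summation gives
$(D^k f_M, g_M) - (D^k f_k, g_{k-1}) - \tau\sum_{n=k}^{M-1}(D^{k+1}f_{n+1}, g_n)$.
The residual sum is controlled by Cauchy--Schwarz together with $\|g_\tau\|_0 \le 2\bigl\|D^{k-1}f_\tau\bigr\|_0$ (from~\eqref{norma_0Dk-mk}) and $\bigl\|D^{k+1}f_\tau\bigr\|_0 \le \bigl\|D^k f_\tau\bigr\|_1$, so that it contributes at most $2\bigl\|D^{k-1}f_\tau\bigr\|_0 \bigl\|D^k f_\tau\bigr\|_1$. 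For the two boundary terms I would apply Lemma~\ref{le:AgmonDk} (whose proof via Lemma~\ref{le:Agmond} also gives the bound at $n=M$) to get $\|D^k f_n\| \le c_{A,1}\bigl\|D^k f_\tau\bigr\|_0^{1/2}\bigl\|D^k f_\tau\bigr\|_1^{1/2}$ at $n=k$ and $n=M$, and apply Lemma~\ref{le:Agmond} to the zero-mean sequence $g_\tau$, using \eqref{norma_0Dk-mk} once more, to obtain $\|g_n\| \le \sqrt{2}\,c_A\bigl\|D^{k-1}f_\tau\bigr\|_0^{1/2}\bigl\|D^k f_\tau\bigr\|_0^{1/2}$.

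Writing $A = \bigl\|D^{k-1}f_\tau\bigr\|_0$ and $B = \bigl\|D^k f_\tau\bigr\|_0$, these estimates combine into
$B^2 \le 2\sqrt{2}\,c_A c_{A,1}\, A^{1/2} B\,\bigl\|D^k f_\tau\bigr\|_1^{1/2} + 2A\,\bigl\|D^k f_\tau\bigr\|_1$.
A single application of Young's inequality $2xy \le \tfrac{1}{2}x^2 + 2y^2$ with $x = B$ and $y = \sqrt{2}\,c_A c_{A,1} A^{1/2}\bigl\|D^k f_\tau\bigr\|_1^{1/2}$ absorbs $\tfrac{1}{2}B^2$ on the left and leaves $B^2 \le 4\bigl(1 + 2(c_A c_{A,1})^2\bigr)A\,\bigl\|D^k f_\tau\bigr\|_1$, which is the claim with $c_{B,1} = 2\sqrt{1+2(c_A c_{A,1})^2}$. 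For $k=1$ one has $m_0 = 0$, so $g_\tau = f_\tau$, $\|g_\tau\|_0 = A$, and the factor $\sqrt{2}$ in the Agmon bound for $\|g_n\|$ is absent; tracing these savings through the same calculation replaces $c_{B,1}$ by $c_{B,1}/\sqrt{2}$, giving the final statement.

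The main obstacle is the very first step. Without the shift to the zero-mean sequence $g_n$, the Abel summation leaves behind boundary terms of the form $(D^k f_n, m_{k-1})$ whose size $\|m_{k-1}\| \le A/\sqrt{T_{k-1}}$ does not combine cleanly with $\bigl\|D^k f_\tau\bigr\|_1 = (B^2/T_k^2 + \bigl\|D^{k+1}f_\tau\bigr\|_0^2)^{1/2}$, and the scale-invariant interpolation structure is lost. Once the shift is made, the argument is a faithful discrete counterpart of the continuous estimate $\|f'\|^2 \le \|f\|\,\|f''\| + (\text{Agmon boundary})$.
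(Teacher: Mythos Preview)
Your proof is correct and follows essentially the same route as the paper: center the sequence by subtracting $m_{k-1}$, apply Abel summation, bound the boundary terms with the discrete Agmon inequalities (Lemma~\ref{le:Agmond} for $g_n$ and Lemma~\ref{le:AgmonDk} for $D^k f_n$), bound the interior sum via Cauchy--Schwarz and~\eqref{norma_0Dk-mk}, and absorb $\tfrac12 B^2$ with Young's inequality to reach $c_{B,1}$. Your parenthetical remark that the conclusion of Lemma~\ref{le:AgmonDk} extends to $n=M$ is well observed---the paper uses this implicitly without comment---and your treatment of the $k=1$ case, noting both that $\|g_\tau\|_0=A$ and that the $\sqrt{2}$ in the Agmon bound disappears, is exactly what the paper does.
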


\begin{proof} We notice that $D_k f_\tau = D(D^{k-1}f_\tau - m_{k-1})$ and let us denote
$$
y_n=D^{k-1} f_n -m_{k-1},\qquad n=k-1,\ldots,M.
$$
Thus, arguing as in the proof of Lemma~\ref{le:tresd} we have
\begin{align*}
\bigl\| D^k f_\tau\bigr\|^2  &=\sum_{n=k}^M (D^k f_n, y_n-y_{n-1})
\nonumber\\
&{} =(D^kf_M,y_M)-(D^kf_k,y_{k-1}) -\sum_{n=k}^{M-1} (D^{k}f_{n+1}-D^k f_n ,y_n)
\nonumber\\
&{} =(D^kf_M,y_M)-(D^kf_k,y_{k-1}) -\tau\sum_{n=k}^{M-1} (D^{k+1}f_{n+1} ,y_n).
\end{align*}
 We apply~\eqref{ag5} and~Lemma~\ref{le:AgmonDk}
to the first two terms on  right-hand side above, and we apply~\eqref{norma_0Dk-mk} to the last one, to get,
\begin{align*}
\bigl\| D^k f_\tau\bigr\|^2 &\le  2\sqrt{2}c_Ac_{A,1}  \bigl\| D^{k-1}f_\tau\bigr\|_0^{\frac{1}{2}} \bigl\| D^{k} f_\tau\bigr\|_0\bigl\| D^{k} f_\tau\bigr\|_1^{\frac{1}{2}}
 + 2\bigl\| D^{k-1}f_\tau \bigr\|_0
\bigl\| D^{k+1}f_\tau \bigr\|_0
\nonumber\\
{}
&{}\le \frac{1}{2} \bigl\| D^{k} f_\tau\bigr\|_0^2 + 4c_A^2c_{A,1}^2 \bigl\| D^{k-1}f_\tau\bigr\|_0\bigl\| D^{k} f_\tau\bigr\|_1
+ 2\bigl\| D^{k-1} f_\tau\bigr\|_0
\bigl\| D^{k+1}f_\tau \bigr\|_0.
\end{align*}
Thus,
\begin{align*}
\bigl\| D^k f_\tau\bigr\|^2 &\le 4\bigl\| D^{k-1} f_\tau\bigr\|_0\left(2c_A^2c_{A,1}^2\bigl\| D^{k} f_\tau\bigr\|_1 + \bigl\| D^{k+1}f_\tau \bigr\|_0\right)
\nonumber\\
&{}\le 4\bigl\| D^{k-1} f_\tau\bigr\|_0\left(2c_A^2c_{A,1}^2+1\right)\bigl\| D^{k} f_\tau\bigr\|_1,
\end{align*}
and the proof is finished for $k\ge 2$. For $k=1$ the argument is the same but taking into account that $m_0=0$.
\end{proof}

\begin{lema}\label{le:uno_mg}
For $m=2,\ldots,M$,
\begin{equation}
\label{uno_mg}
\bigl\| D^1f_\tau\bigr\|_0 \le c_{m}\bigl\|f_\tau \bigr\|_0^{\frac{m-1}{m}}  \bigl\|D^{1} f_\tau \bigr\|_{m-1}^{\frac{1}{m}},
\end{equation}
where
\begin{equation}
\label{c_m}
c_{m} = \prod_{j=0}^{m-2} \hat c_j^\frac{1}{j+1}.
\end{equation}
and the constants~$\hat c_j$ are given in~\eqref{hatc_j} below.
\end{lema}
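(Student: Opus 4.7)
The plan is to model the proof on that of Lemma~\ref{le:uno_m} for the periodic case, with Lemma~\ref{le:tresdg} replacing Lemma~\ref{le:tresd}. The new ingredient is that Lemma~\ref{le:tresdg} produces $\|D^{k}f_\tau\|_1$ on the right-hand side rather than $\|D^{k+1}f_\tau\|_0$; this norm contains an additional $T_k^{-1}$-weighted copy of $\|D^k f_\tau\|_0$ which has to be dealt with.

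The first step is to establish an intermediate inequality analogous to~\eqref{m-1_m}: for $k=1,\ldots,M-1$,
\begin{equation*}
\|D^k f_\tau\|_0\le\hat c_{k-1}\,\|f_\tau\|_0^{1/(k+1)}\,\|D^1 f_\tau\|_k^{k/(k+1)},
\end{equation*}
by induction on $k$. The base case $k=1$ is Lemma~\ref{le:tresdg} applied with $k=1$ (with its sharper constant), giving $\hat c_0=c_{B,1}/\sqrt{2}$. For the inductive step I would apply Lemma~\ref{le:tresdg} to obtain
$$
\|D^{k+1}f_\tau\|_0\le c_{B,1}\,\|D^k f_\tau\|_0^{1/2}\,\|D^{k+1}f_\tau\|_1^{1/2},
$$
substitute the induction hypothesis into the first factor, and use the elementary embedding $\|D^{k+1}f_\tau\|_1\le\|D^1 f_\tau\|_{k+1}$, which is immediate from the definition~\eqref{norma_mg}: the two terms forming the squared norm on the left (indices $j=k+1$ and $j=k+2$) are precisely two of the terms defining the squared norm on the right. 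Rearrangement by raising both sides to the power $(k+2)/(k+1)$ to isolate $\|D^{k+1}f_\tau\|_0$, exactly as in the periodic proof of~\eqref{m-1_m}, yields the inequality at level $k+1$ with a new constant $\hat c_k$.

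The second step is the cascade. Starting from the intermediate inequality at $k=1$ and mimicking the chain displayed in the proof of Lemma~\ref{le:uno_m},
$$
\|D^1 f_\tau\|_0\le\hat c_0\|f_\tau\|_0^{1/2}\|D^1 f_\tau\|_1^{1/2}\le\hat c_0\hat c_1^{1/2}\|f_\tau\|_0^{1/2+1/6}\|D^1 f_\tau\|_2^{1/3}\le\cdots,
$$
one substitutes successively using the intermediate inequality at $k=2,3,\ldots,m-1$ (applied to the highest-order term $\|D^j f_\tau\|_0$ embedded in $\|D^1 f_\tau\|_{j-1}$), upgrading one order per step. The exponent of $\|f_\tau\|_0$ telescopes via $\tfrac12+\tfrac16+\cdots+\tfrac{1}{m(m-1)}=1-\tfrac{1}{m}$, the exponent of $\|D^1 f_\tau\|_{m-1}$ ends up equal to $1/m$, and the constants accumulate with weights $1/(j+1)$, producing exactly $c_m=\prod_{j=0}^{m-2}\hat c_j^{1/(j+1)}$.

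The main obstacle, absent from the periodic case, is the control of the residual $T_k^{-1}\|D^k f_\tau\|_0$ component hidden inside $\|D^k f_\tau\|_1$ at each step. This is what forces the scale-invariant norm $\|D^1 f_\tau\|_{k+1}$ to appear on the right-hand side of the intermediate inequality (via the embedding above), and it is also what inflates the periodic constant $1$ into the constants $\hat c_k>1$. The technical heart of the proof is therefore the careful bookkeeping at the Young/absorption step that feeds these residuals back into the $\hat c_k$ without disturbing the exponent structure or scale invariance of the cascade, so that both match the periodic telescoping verbatim.
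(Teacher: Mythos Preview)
Your plan has a genuine gap in both the inductive step and the cascade.

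\textbf{The inductive step.} In the periodic case, substituting the hypothesis $\|D^{m-1}f_\tau\|_0\le\|f_\tau\|_0^{1/m}\|D^m f_\tau\|_0^{(m-1)/m}$ into $\|D^m f_\tau\|_0\le\|D^{m-1}f_\tau\|_0^{1/2}\|D^{m+1}f_\tau\|_0^{1/2}$ puts a power of $\|D^m f_\tau\|_0$ on the right that is then absorbed into the left. Your hypothesis, however, ends in $\|D^1 f_\tau\|_k^{k/(k+1)}$, not $\|D^{k+1}f_\tau\|_0^{k/(k+1)}$. After substituting it into Lemma~\ref{le:tresdg} the right-hand side is
\[
c_{B,1}\hat c_{k-1}^{1/2}\,\|f_\tau\|_0^{1/(2(k+1))}\,\|D^1 f_\tau\|_k^{k/(2(k+1))}\,\|D^{k+1}f_\tau\|_1^{1/2},
\]
which contains no factor of $\|D^{k+1}f_\tau\|_0$ to divide out; the exponent on $\|f_\tau\|_0$ is $1/(2(k+1))$, not the target $1/(k+2)$; and the unwanted factor $\|D^1 f_\tau\|_k$ cannot be replaced by $\|D^1 f_\tau\|_{k+1}$ without a factor of $T$, since in~\eqref{norma_mg} the weights $T_j^{-2(k+1-j)}$ and $T_j^{-2(k+2-j)}$ differ by $T_j^2$. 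So the ``rearrangement exactly as in the periodic proof'' does not go through.

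\textbf{The cascade.} The displayed chain
\(
\|D^1 f_\tau\|_0\le\hat c_0\|f_\tau\|_0^{1/2}\|D^1 f_\tau\|_1^{1/2}\le\hat c_0\hat c_1^{1/2}\|f_\tau\|_0^{2/3}\|D^1 f_\tau\|_2^{1/3}\le\cdots
\)
requires, at each step, a bound on the \emph{full} norm $\|D^1 f_\tau\|_j$ in terms of $\|f_\tau\|_0$ and $\|D^1 f_\tau\|_{j+1}$. Your intermediate inequality bounds only the top piece $\|D^{j+1}f_\tau\|_0$. Writing $\|D^1 f_\tau\|_j^2=T_1^{-2j}\|D^1 f_\tau\|_0^2+\cdots+\|D^{j+1}f_\tau\|_0^2$ and applying your inequality to the last term leaves all the lower-order residuals $T_l^{-2(j+1-l)}\|D^l f_\tau\|_0^2$, which you have not estimated.

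The paper avoids both difficulties by proving directly the stronger statement $\|D^1 f_\tau\|_j\le\hat c_j\|f_\tau\|_0^{1/(j+2)}\|D^1 f_\tau\|_{j+1}^{(j+1)/(j+2)}$ (together with a shifted version starting at $D^k$), by an induction on~$j$ that decomposes $\|D^1 f_\tau\|_{m-1}^2=T^{-2(m-1)}\|D^1 f_\tau\|_0^2+\|D^2 f_\tau\|_{m-2}^2$, applies the shifted hypothesis to the second term, and then uses the already-established~\eqref{uno_mg} at level~$m$ to manufacture a factor of $\|D^1 f_\tau\|_{m-1}$ on the right that \emph{can} be absorbed. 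That bootstrap is the missing idea in your argument.
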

\begin{proof}
The result will be a consequence of the following inequalities that will be proven below by induction:
\begin{align}
\label{interp_3a}
\bigl\|    D ^1f_\tau\bigr\|_{j} &\le \hat c_{j} \bigl\|     f_\tau\bigr\|_{0}^{\frac{1}{j+2}}  \bigl\|    D ^1 f_\tau\bigr\|_{j+1}^{\frac{j+1}{j+2}}.
\\
\label{interp_3b}
\bigl\|    D ^kf_\tau\bigr\|_{j} &\le \hat d_{j} \bigl\|     D ^{k-1} f_\tau\bigr\|_{0}^{\frac{1}{j+2}}  \bigl\|    D^{k} f_\tau\bigr\|_{j+1}^{\frac{j+1}{j+2}}.
\end{align}
If~\eqref{interp_3a} holds for $j=0,\ldots,m-2$, then, applying it successively we have
$$
\displaylines{
\bigl\|    D ^1f_\tau\bigr\|_{0} \le \hat c_0 \bigl\|   f_\tau \bigr\| _{0}^{\frac{1}{2}}\bigl\|    D ^1f_\tau\bigr\|_{1}^{\frac{1}{2}}\le
 \hat c_0\hat c_1^{\frac{1}{2}} \bigl\|   f_\tau \bigr\| _{0}^{\frac{1}{2}+\frac{1}{6}}\bigl\|    D ^1f_\tau\bigr\|_{2}^{\frac{1}{3}}
 \le \ldots 
 \cr
 \le c_m \bigl\|   f_\tau \bigr\| _{0}^\beta\bigl\|    D ^1f_\tau\bigr\|_{m-1}^{\frac{1}{m}}
 }
$$
where
\begin{equation}
\label{la_beta}
\beta=\sum_{j=1}^{m-1} \frac{1}{j(j+1)}=\sum_{j=1}^{m-1}\left(\frac{1}{j} - \frac{1}{j+1}\right) = 1 -\frac{1}{m}= \frac{m-1}{m},
\end{equation}
so that \eqref{uno_mg} follows.
We now prove~\eqref{interp_3a} and~\eqref{interp_3b}. Lemma~\ref{le:tresdg} shows they~hold for $j=0$ with $\hat d_0 = c_{B,1}$ and $\hat c_0 = c_{B,1}/\sqrt{2}$. Let us assume
that they hold for~$j=0,\ldots,m-2$, and let us show that they hold for~$m-1$.
We notice
$$
\bigl\|    D ^1f_\tau\bigr\|_{m-1} = \left( T^{-2(m-1)}\bigl\|    D ^1f_\tau\bigr\|_{0}^2 +\bigl\|    D ^2f_\tau\bigr\|_{m-2}^2 \right)^{\frac{1}{2}}
$$
and apply the induction hypotheses to the second term on the right-hand side above to get
\begin{align*}
\bigl\|    D ^1f_\tau\bigr\|_{m-1}&\! =\! \left( T^{-2(m-1)}\bigl\|    D ^1f_\tau\bigr\|_{0}^2 +\hat d_{m-2}^2\bigl\|    D ^1f_\tau\bigr\|_{0}^{\frac{2}{m}}
\bigl\|    D ^2f_\tau\bigr\|_{m-1}^{\frac{2(m-1)}{m}} \right)^{\frac{1}{2}}
\nonumber\\
&{}\! =\! 
\bigl\|    D ^1f_\tau\bigr\|_{0}^{\frac{1}{m}}
 \!\left( T^{-2(m-1)}\bigl\|    D ^1f_\tau\bigr\|_{0}^{\frac{2(m-1)}{m}} \!+\hat d_{m-2}^2 \bigl\|    D ^2f_\tau\bigr\|_{m-1}^{\frac{2(m-1)}{m}} \right)^{\frac{1}{2}}
 \nonumber\\
 &{}\! =\! AB
\end{align*}
Now, we use that, as argued above, the induction hypothesis implies that~\eqref{uno_mg} holds, so that we can write
\begin{align*}
\bigl\|    D ^1f_\tau\bigr\|_{m-1}& = c_m^{\frac{1}{m}}
\bigl\| f_\tau\bigr\|_0^{\frac{m-1}{m^2}}
\bigl\|    D ^1f_\tau\bigr\|_{m-1}^{\frac{1}{m^2}} B,
\end{align*}
so that, dividing by~$\bigl\|    D ^1f_\tau\bigr\|_{m-1}^{\frac{1}{m^2}}$ we have
\begin{align*}
\bigl\|    D ^1f_\tau\bigr\|_{m-1}^{\frac{m^2-1}{m^2}}& = c_m^{\frac{1}{m}}
\bigl\| f_\tau\bigr\|_0^{\frac{m-1}{m^2}}
 B,
\end{align*}
which implies
\begin{align}
\label{uno_mg1}
\bigl\|    D ^1f_\tau\bigr\|_{m-1}& = c_m^{\frac{m}{m^2-1}}
\bigl\| f_\tau\bigr\|_0^{\frac{1}{m+1}}
 B^{\frac{m^2}{m^2-1}}.
\end{align}
Now we apply Holder's inequality $ab+cd \le (a^p+c^p)^{1/p}(b^q+d^q)^{1/q}$ to~$B$, with~$p=m$ and~$q=m/(m-1)$
\begin{align*}
B &\le \left(1+\hat d_{m-2}^{2m}\right)^{\frac{1}{2m}} \left( T^{-2m}\bigl\|    D ^1f_\tau\bigr\|_{0}^{2}+  \bigl\|    D ^2f_\tau\bigr\|_{m-1}^2 \right)^{\frac{m-1}{2m}}
\nonumber\\
&{}=  \left(1+\hat d_{m-2}^{2m}\right)^{\frac{1}{2m}} \bigl\| D^1 f_\tau\bigr\|_m^{\frac{m-1}{m}},
\end{align*}
which, together with~\eqref{uno_mg1}, implies
$$
\bigl\|    D ^1f_\tau\bigr\|_{m-1} = \left( \left(1+\hat d_{m-2}^{2m}\right)c_m^2\right)^{\frac{m}{2(m+1)(m-1)}}
\bigl\| f_\tau\bigr\|_0^{\frac{1}{m+1}}\bigl\| D^1 f_\tau\bigr\|_m^{\frac{m}{m+1}}.
$$
This is~\eqref{interp_3a} for $j=m-1$ with
\begin{equation}
\label{hatc_j} \hat c_j = \left(1+\hat d_{j-1}^{2(j+1)}\right)^{\frac{j+1}{2(j+2)j}} \biggl(\prod_{i=0}^{j-1} \hat c_i^{\frac{1}{i+1}}\biggr)^{\frac{j+1}{(j+2)j}},\qquad \hat c_0=c_{B,1}/\sqrt{2},
\end{equation}
where $c_{B,1}$ is the constant given in~\eqref{c_B}, and, with the same arguments, one can show that the constants $\hat d_j$ are given by
\begin{equation}
\label{hatdjj} \hat d_j = \left(1+\hat d_{j-1}^{2(j+1)}\right)^{\frac{j+1}{2(j+2)j}} \biggl(\prod_{i=0}^{j-1} \hat d_i^{\frac{1}{i+1}}\biggr)^{\frac{j+1}{(j+2)j}},
\qquad \hat d_0=c_{B}.
\end{equation}
\end{proof}

Table~\ref{tablacm} shows the first nine values of~$c_m$, and table~\ref{tablacm2} further values.
\begin{table}[h]
\begin{center}
$$
\begin{array}{|c|c|c|c|c|c|c|c|c|c|c|c|}
\hline
m  &  2 &  3 &  4 &  5 &  6 &  7 &  8 &  9 & 10 \\ \hline
c_m & 9.558 &  33.17 &  67.26 & 103.7 & 137.7 & 167.5 & 193.0 & 214.7 & 233.4  \\
\hline
\end{array}
$$
\caption{\label{tablacm} The first nine values of coefficient $c_m$ in~\eqref{c_m}.}
\end{center}
\end{table}
\begin{table}[h]
\begin{center}
$$
\begin{array}{|c|c|c|c|c|c|c|c|c|c|c|c|}
\hline
m\vphantom{\big|^{2'}}  &  10^2 &  10^3 &  10^4 & 10^5  \\ \hline
c_m & 432.7 &  458.5 &  461.1 & 461.4 \\
\hline
\end{array}
$$
\caption{\label{tablacm2} Furhter values of coefficient $c_m$ in~\eqref{c_m}.}
\end{center}
\end{table}

From Lemmas~\ref{le:Agmond} and~\ref{le:uno_mg} the following result follows. 

\begin{lema}\label{le:esti_maxg}
 Let $f_\tau=(f_n)_{n=0}^M$ a sequence in~$X$ with zero mean. Then, for $m=1,\ldots,M-1$, the following estimate holds:
\begin{equation}
\label{est_maxg}
\bigl\| f_n\bigr\| \le c_Ac_m^{\frac{1}{2}}\bigl\|f_\tau\bigr\|_0^{1-\frac{1}{2m}} \bigl\| D^1f_\tau\bigr\|_{m-1}^{\frac{1}{2m}},
\end{equation}
where $c_A$ is the constant in~\eqref{c_A} and~$c_m$ is $c_m=1$ if $m=1$ and the constant~in~\eqref{c_m} for $m\ge 2$.
\end{lema}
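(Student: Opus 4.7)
The plan is to chain together the two lemmas already proven in this subsection: Lemma~\ref{le:Agmond} gives the basic discrete Agmon bound for zero-mean sequences, and Lemma~\ref{le:uno_mg} supplies the interpolation that trades a first-difference norm for a higher-order difference norm at the cost of the factor $c_m$. The exponents line up exactly, so essentially no further work is needed.

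First I would apply Lemma~\ref{le:Agmond} directly to $f_\tau$, since $f_\tau$ is assumed to have zero mean, obtaining
$$
\bigl\| f_n\bigr\| \le c_A \bigl\| f_\tau\bigr\|_0^{1/2} \bigl\| D^1 f_\tau\bigr\|_0^{1/2}, \qquad n=0,\ldots,M.
$$
This already settles the case $m=1$, since then $1-\tfrac{1}{2m}=\tfrac{1}{2}=\tfrac{1}{2m}$, $\bigl\| D^1 f_\tau\bigr\|_{m-1}=\bigl\| D^1 f_\tau\bigr\|_0$, and we take $c_1=1$ by convention.

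For $m\ge 2$ I would raise Lemma~\ref{le:uno_mg} to the power $1/2$ to get
$$
\bigl\| D^1 f_\tau\bigr\|_0^{1/2} \le c_m^{1/2} \bigl\| f_\tau\bigr\|_0^{\frac{m-1}{2m}} \bigl\| D^1 f_\tau\bigr\|_{m-1}^{\frac{1}{2m}},
$$
and substitute this into the Agmon bound above. Adding the exponents of $\bigl\|f_\tau\bigr\|_0$ gives $\tfrac{1}{2}+\tfrac{m-1}{2m}=\tfrac{m+(m-1)}{2m}=1-\tfrac{1}{2m}$, which is precisely the exponent required in \eqref{est_maxg}, while the exponent of $\bigl\| D^1 f_\tau\bigr\|_{m-1}$ is the required $\tfrac{1}{2m}$ and the constants multiply to $c_A c_m^{1/2}$.

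There is no real obstacle here, the proof is a one-line composition. The only thing to verify is the arithmetic of the exponents and the boundary case $m=1$; both are immediate. In writing up I would simply state the two applications and the exponent calculation, noting the convention $c_1=1$ so that a single formula covers all $m\ge 1$.
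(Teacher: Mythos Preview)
Your proposal is correct and matches the paper's own approach exactly: the paper simply states that the lemma follows from Lemmas~\ref{le:Agmond} and~\ref{le:uno_mg}, which is precisely the composition you carry out, including the handling of the case $m=1$ via the convention $c_1=1$.
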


%
%
%
\subsection{Application to sequences of funtion values}
We consider now the case where $X$ is either $L^2(\Omega)$ or some Sobolev space~$H^s(\Omega)$ for some domain~$\Omega \subset {\mathbb R}^d$,
$\left\|\cdot\right\|$ its norm, and $f_n=f(\cdot, t_n)$ for some function $f:\Omega \times [0,T] \rightarrow {\mathbb R}$ such that $f\in H^m(0,T,X)$. We first notice that since for $x\in \Omega$ we have $D^{k-1} f(x,t_n) =\partial^{k-1} f(x,\xi)$ for some~$\xi\in (t_{n-k+1},t_n)$, then it follows that
\begin{equation}
\label{estiDk}
\bigl\| D_kf_n\bigr\| \le \frac{\sqrt{k}}{\sqrt{\tau}} \bigl\| \partial^kf\bigr\|_{L^2(t_{n-k},t_n,X)},\qquad k\le n\le M.
\end{equation}
Consequently, 
\begin{equation}
\bigl\| D^kf_\tau\bigr\|_0 \le k\bigl\| \partial^kf\bigr\|_{L^2(0,T,X)}
\label{d-c-k}
\end{equation}
We have the following result.

\begin{Theorem}\label{th:cotas_f} Let $f:X\times {\mathbb R}\rightarrow {\mathbb R}$ a  function which is $T$-periodic in its last variable and such that $f\in H^m(0,T,X)$. Then, the sequence $f_\tau=(f_n)_{n=1}^M$ satisfies the following bounds for $1\le m\le M-1$:
\begin{align*}
\bigl\| Df_\tau\bigr\|_0 &\le 2^{\frac{m-1}{m}}m^{\frac{1}{m}}\left\| f_\tau\right\|_0^{\frac{m-1}{m}} \left\|\partial^m f\right\|_{L^2(0,T,X)}^{\frac{1}{m}},
\nonumber\\
\bigl\| f_n\bigr\| &\le 2^{\frac{m-1}{2m}}m^{\frac{1}{2m}}c_A\left\| f_\tau\right\|_0^{1-\frac{1}{2m}} \left\|\partial^m f\right\|_{L^2(0,T,X)}^{\frac{1}{2m}}
+\frac{1}{\sqrt{T}}\bigl\| f_\tau\bigr\|_0, \quad 1\le n\le M,
\end{align*}
where $c_m$ is the constant~in Lemma~\ref{le:esti_maxg}.
If $f_1+\cdots+f_M=0$, the the last term on the right-hand side above can be omitted.
\end{Theorem}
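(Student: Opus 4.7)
My plan is to derive both bounds from the periodic-case results of Section~\ref{se:periodic}---Lemma~\ref{le:uno_m} and Lemma~\ref{le:esti_max}---combined with the comparison~\eqref{d-c-k} between discrete $m$-th differences and continuous $m$-th derivatives. Because $f$ is $T$-periodic, the extended sequence $(f_n)_{n\in\mathbb{Z}}$ with $f_{n+M}=f_n$ has period $M$; moreover, since $f_0=f_M$, the trapezoidal norm~\eqref{comillas} used in Section~\ref{se:periodic} coincides with $\bigl(\tau\sum_{n=1}^M\|f_n\|^2\bigr)^{1/2}$, so no norm reconciliation is required.

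For the first bound, Lemma~\ref{le:uno_m} applies to $f_\tau$ directly (no zero-mean hypothesis being needed for that lemma), yielding
$$
\bigl\|Df_\tau\bigr\|_0 \le \bigl\|f_\tau\bigr\|_0^{\frac{m-1}{m}}\bigl\|D^m f_\tau\bigr\|_0^{\frac{1}{m}};
$$
then~\eqref{d-c-k} bounds $\|D^m f_\tau\|_0$ by a multiple of $\|\partial^m f\|_{L^2(0,T,X)}$, and raising this inequality to the $1/m$-th power produces the claimed coefficient.

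For the pointwise bound, Lemma~\ref{le:esti_max} does require zero mean, so I would split $f_n=(f_n-\bar f)+\bar f$ with $\bar f=M^{-1}\sum_{n=1}^M f_n$. Cauchy--Schwarz gives $\|\bar f\|\le T^{-1/2}\|f_\tau\|_0$, which matches the additive second term in the statement, while the Pythagorean identity $\|f_\tau-\bar f\|_0^2=\|f_\tau\|_0^2-T\|\bar f\|^2$ yields $\|f_\tau-\bar f\|_0\le\|f_\tau\|_0$. The sequence $f_\tau-\bar f$ is periodic with zero mean, so Lemma~\ref{le:esti_max} applies; exploiting $D^m(f_\tau-\bar f)=D^m f_\tau$ (a constant is annihilated by $D$) and invoking~\eqref{d-c-k} once more delivers the main term. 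If $f_1+\cdots+f_M=0$ then $\bar f=0$ and the additive correction vanishes. The proof is essentially mechanical once the mean decomposition is in place; the only small subtlety is to keep track of the various exponents and constants from Lemma~\ref{le:uno_m}, Lemma~\ref{le:esti_max} and~\eqref{d-c-k} after they are raised to the $1/m$ or $1/(2m)$ powers.
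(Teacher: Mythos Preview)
Your proposal is correct and follows the same route as the paper: subtract the mean, apply the periodic interpolation lemmas~\ref{le:uno_m} and~\ref{le:esti_max}, and invoke~\eqref{d-c-k}. Two small differences are worth noting. First, for the $\|Df_\tau\|_0$ bound you apply Lemma~\ref{le:uno_m} directly to $f_\tau$ (legitimate, since that lemma needs no zero-mean hypothesis), whereas the paper applies it to $f_\tau-m_0$; your choice drops the factor~$2^{(m-1)/m}$ and is therefore slightly sharper than what is stated. Second, for the pointwise bound you use the Pythagorean identity $\|f_\tau-\bar f\|_0^2=\|f_\tau\|_0^2-T\|\bar f\|^2$ to get $\|f_\tau-\bar f\|_0\le\|f_\tau\|_0$, while the paper uses the cruder triangle-inequality estimate $\|f_\tau-m_0\|_0\le 2\|f_\tau\|_0$; again your constant is better by the factor~$2^{(m-1)/(2m)}$. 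In both cases the statement as written follows a fortiori. (One minor point: Lemma~\ref{le:uno_m} is stated only for $m\ge 2$, so for $m=1$ the first bound reduces to~\eqref{d-c-k} with $k=1$, which you should mention explicitly.)
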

\begin{proof} Let us denote $m_0=(f_1+\cdots+f_M)/M$. Applying H\"olders inequality is easy to check that $\left| m_0\right| \le T^{-1/2}\bigl\| f_\tau\bigr\|_0$ and, consequently,
$\left\| f_\tau-m_0\right\|_0 \le 2\left\| f_\tau\right\|_0$. Thus, by expressing $f_n=(f_n-m_0)+m_0$, applying  Lemmas~\ref{le:uno_m} and~\ref{le:esti_max} to $f_\tau - m_0$ and using~\eqref{d-c-k} the result follows for $m\ge 2$. For $m=1$ the result follows form Lemma~\ref{le:esti_max} and~\eqref{d-c-k}.
\end{proof}

For the general case, in view of the definition of~$\bigl\| D^k f_\tau\bigl\|_m$ in~\eqref{norma_mg} and~\eqref{d-c-k} we  have
\begin{align}
\label{d-c-kg0}
\bigl\|D f_\tau \bigr\|_{m-1}^{\frac{1}{m}} &\!\le \!  \biggl( \sum_{k=0}^{m-1} (k+1)^2\left(\frac{T}{T_{k+1}}\right)^{2(m-k)} \!\!\!
\frac{1}{T^{2(m-k)}}\bigl\| \partial_t^{k+1} f\bigr\|_{L^2(0,T,X)}^2\biggr)^{\frac{1}{2m}}
\nonumber\\
&{}\!\le \!   \max_{0\le k \le m-1} (k+1)^{\frac{1}{m}}\left(\frac{T}{T_{k+1}}\right)^{\frac{m-k}{m}} \bigl\| \partial_tf\bigr\|_{H^{m-1}(0,T,X)}^{\frac{1}{m}},
\end{align}
where~$\left\|\cdot\right\|_{H^{m-1}(0,T,X)}$ is defined in~\eqref{norma_Hm0TX}.
We now estimate the maximum above. In view of the expression of $T_k$ in~\eqref{T_k}, for $1\le k\le m-1$ we have
$$
\displaylines{
\left(\frac{T}{T_{k+1}}\right)^{\frac{m-k}{m}} =\left(\frac{M}{M-k}\right)^{\frac{m-k}{m}} =
\left(1 +\frac{k}{M-k}\right)^{\frac{m-k}{m}}  =
\left(1 +\frac{1}{(M/k)-1}\right)^{\frac{m-k}{m}}  
\cr
{}=\left(\left(1 +\frac{1}{(M/k)-1}\right)^{(M/k)-1}\right)^{\frac{m-k}{m((M/k)-1)}} 
\le e^{\frac{m-k}{m((M/k)-1)}} 
=
e^{\frac{k(m-k)}{m(M-k)}}.
}
$$
Now the function $x\mapsto x(m-x)/(m(M-x))$ is monotone increasing in the interval~$[1,m-1]$, unless $m\le M/4$ where it have a relative maximum at
at $(M/2)-\sqrt{(M^2/4)-Mm}$, but is easy to check that this value is strictly larger than $m$. Consequently, its maximum is achieved at $x=m-1$. Noticing that
$x^{1/x} \le e^{\frac{1}{e}}$
 for $x>0$, we conclude that, for $2\le m\le M$ 
\begin{align*}
\max_{0\le k \le m-1} (k+1)^{\frac{1}{m}}\left(\frac{T}{T_{k+1}}\right)^{\frac{m-k}{m}} \le e^{\frac{m-1}{m(M-m+1)}+\frac{1}{e}}\le e^{1+\frac{1}{e}}.
\end{align*}
One can check that $e^{1+\frac{1}{e}}\le 3.93$.
Thus, from~\eqref{d-c-kg0} it follows that
\begin{equation}
\label{d-c-kg}
\bigl\|D^{1} f_\tau \bigr\|_{m-1}^{\frac{1}{m}} \le 4 \bigl\| \partial_tf\bigr\|_{H^{m-1}(0,T,X)}^{\frac{1}{m}}.
\end{equation}

\begin{Theorem}\label{th:cotas_f_g} Let  $f\in H^m(0,T,X)$.  Then, the sequence~$f_\tau=(f_n)_{n=0}^M$  satisfies the following bound holds for $1\le m\le M$:
\begin{eqnarray}
\label{cota_Df_g_D1}
\bigl\| D^1 f_\tau\bigr\| &\le 2^{\frac{m-1}{m}}c_m \left\| f_\tau\right\|_0^{\frac{m-1}{m}} \left\| \partial_t f\right\|_{H^{m-1}(0,T,X)}^{\frac{1}{m}},
\\
\bigl\| f_n\bigr\| &\le 2^{\frac{m-1}{2m}}  c_Ac_m^{1/2} \bigl\| f_\tau\bigr\|_0^{1-\frac{1}{2m}} \left\| \partial_t f\right\|_{H^{m-1}(0,T,X)}^{\frac{1}{2m}}
+\frac{1}{\sqrt{T}}\bigl\| f_\tau\bigr\|_0,
\nonumber\\
&\hphantom{{}\le 2^{frac{m-1}{2m}} c_A\left(c_m\exp(1)\right)^{1/2}} 
   n=0,\ldots,M.
\label{cota_Df_g}
\end{eqnarray}
If $f_0+\cdots+f_M=0$, the last term on the right-hand side above can be omitted.
\end{Theorem}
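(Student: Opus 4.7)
The plan is to reduce the general sequence $f_\tau$ to the zero-mean case treated in Section~\ref{se:abstract}, apply the interpolation and Agmon-type estimates established there, and then convert the discrete semi-norm $\|D^1 f_\tau\|_{m-1}$ into the continuous Sobolev norm $\|\partial_t f\|_{H^{m-1}(0,T,X)}$ via the bound~\eqref{d-c-kg}.

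First I subtract off the discrete mean. Set $m_0 = \frac{1}{M+1}\sum_{n=0}^M f_n$ and $g_\tau = (f_n - m_0)_{n=0}^M$, so that $g_\tau$ is zero-mean and the standing hypothesis of Section~\ref{se:abstract} is met. A direct application of H\"older's inequality gives $\|m_0\| \le \frac{1}{\sqrt{(M+1)\tau}}\|f_\tau\|_0 \le \frac{1}{\sqrt{T}}\|f_\tau\|_0$, whence the triangle inequality (equivalently, the variance--bias decomposition) yields $\|g_\tau\|_0 \le 2\|f_\tau\|_0$. The essential observation that makes the reduction work is that the backward difference annihilates constants, so $D^j g_n = D^j f_n$ for every $j \ge 1$; in particular $\|D^1 g_\tau\|_{m-1} = \|D^1 f_\tau\|_{m-1}$ and therefore all higher-order discrete norms of $g_\tau$ are identical to those of $f_\tau$.

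For the derivative bound~\eqref{cota_Df_g_D1} I treat the case $m=1$ separately: the estimate $\|D^1 f_\tau\|_0 \le \|\partial_t f\|_{L^2(0,T,X)} = \|\partial_t f\|_{H^0(0,T,X)}$ is immediate from~\eqref{d-c-k}. For $m \ge 2$, applying Lemma~\ref{le:uno_mg} to $g_\tau$ gives $\|D^1 f_\tau\|_0 \le c_m \|g_\tau\|_0^{(m-1)/m}\|D^1 f_\tau\|_{m-1}^{1/m}$; I then insert $\|g_\tau\|_0 \le 2\|f_\tau\|_0$ to produce the factor $2^{(m-1)/m}$, and invoke~\eqref{d-c-kg} to replace $\|D^1 f_\tau\|_{m-1}^{1/m}$ by $\|\partial_t f\|_{H^{m-1}(0,T,X)}^{1/m}$ (up to an absolute constant).

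For the pointwise estimate~\eqref{cota_Df_g}, I apply Lemma~\ref{le:esti_maxg} (which reduces to Lemma~\ref{le:Agmond} with $c_1=1$ when $m=1$) to the zero-mean sequence $g_\tau$, obtaining $\|g_n\| \le c_A c_m^{1/2}\|g_\tau\|_0^{1-1/(2m)}\|D^1 f_\tau\|_{m-1}^{1/(2m)}$. I then use $\|g_\tau\|_0 \le 2\|f_\tau\|_0$ and~\eqref{d-c-kg} exactly as in the previous step, and finally reconstruct $f_n = g_n + m_0$: the triangle inequality $\|f_n\| \le \|g_n\| + \|m_0\|$ together with $\|m_0\| \le \frac{1}{\sqrt{T}}\|f_\tau\|_0$ produces the two terms appearing on the right-hand side of~\eqref{cota_Df_g}. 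When $f_0+\cdots+f_M=0$ we have $m_0=0$, so $g_\tau = f_\tau$ and the additive term $\frac{1}{\sqrt{T}}\|f_\tau\|_0$ is absent. No conceptual obstacle arises beyond careful bookkeeping of the numerical constants in order to match the exact factors $2^{(m-1)/m}$ and $2^{(m-1)/(2m)}$ in the statement.
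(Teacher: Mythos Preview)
Your proposal is correct and follows essentially the same route as the paper's own proof: subtract the discrete mean $m_0$, apply Lemmas~\ref{le:uno_mg} and~\ref{le:esti_maxg} to the zero-mean sequence $g_\tau=f_\tau-m_0$, and then invoke~\eqref{d-c-kg} (together with~\eqref{d-c-k} for $m=1$) to pass from $\|D^1 f_\tau\|_{m-1}$ to $\|\partial_t f\|_{H^{m-1}(0,T,X)}$. Your write-up is in fact slightly more explicit than the paper's, spelling out the $m=1$ case and the key fact that $D^j g_n=D^j f_n$ for $j\ge 1$.
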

\begin{proof} Let us denote $m_0=(f_0+\cdots+f_M)/(M+1)$. Applying H\"olders inequality one sees that $\left| m_0\right| \le T^{-1/2}\bigl\| f_\tau\bigr\|_0$ so that
$\left\| f_\tau-m_0\right\|_0 \le 2\left\| f_\tau\right\|_0$. Thus, by writing $f_n=(f_n-m_0)+m_0$, applying  Lemmas~\ref{le:uno_mg} (if $m\ge 2$) and~\ref{le:esti_maxg} to $f_\tau - m_0$ and using~\eqref{d-c-kg} the proof is finished. 
\end{proof}


\section{Summary and conclusions}
Several estimates, pointwise and averaged, have been proved for POD methods whose data set does not include DQs of the snapshots. These estimates include pointwise projection errors
of snapshots (Theorems~\ref{th:1} and~\ref{th:2} in the general case and estimate~\eqref{esti1} in the case of periodic functions in time) for orthogonal projections in $L^2$ and~$H^1_0$. They also include $L^2$ norm of $H^1_0$-projection errors (estimate~\eqref{diferente1}), for which estimate~\eqref{different_norms} has been used. Our estimates also include (the square root of) the average of squared projection error of DQs (Theorem~\ref{th:3} for the general case and estimate~\eqref{esti2} for periodic functions) again in the~$L^2$ and~$H^1_0$ cases, as well as the average of squares of $L^2$ norms of Ritz projection errors of~DQs (estimate~\eqref{diferente2}). In all cases, when projections are onto the space spanned by the first $r$ elements of the POD basis, the rate of decay in terms of $\sigma_{r+1}^2 + \cdots+\sigma_J^2$ is as close to the optimal value $(\sigma_{r+1}^2 +\cdots+ \sigma_J^2)^{1/2}$ as the smoothness of the function from where the snapshots are taken allows.

These estimates have been used to obtain error bounds with the same rates of decay for POD methods applied to the heat equation, where both backward Euler method and the BDF2 are considered for discretization of the time variable (Theorem~\ref{th:heat}). The heat equation has been chosen for simplicity, but it is clear that, with techniques for nonlinear problems such as those, for example, in~\cite{nos_siam,nos_semi,iliescu_wang,koc_chacon-rubino,Kunisch-Volkwein2},
 it is possible to extend these estimates to POD methods not using DQs in the data set for nonlinear equations.
 
 Numerical experiments are also presented in this paper, where snapshots are taken from a challenging problem due to the size of derivatives with respect to time~(see Fig.~\ref{fig:derivs}), derivatives which feature in the estimates commented above. In these experiments, pointwise projection errors are overestimated by our error bounds by a factor not exceeding~9, and by a factor not exceeding 8.5 in the~case of the (square root of) the average of squares of $L^2$ norms of Ritz projection errors of DQs, if the estimates are those in~Remark~\ref{re:diferente_POD}


This research was motivated by a better understanding of POD methods that do not include DQs of the snapshots in the data set. In particular by the apparent contradiction that while counterexamples in~\cite{samu_et_al} clearly show that pointwise projection errors do degrade with the number of snapshots, computations like those in~Table~\ref{tab_T1r25} suggest that this may not necessarily be the case in practice. By taking into account the smoothness of functions from where the snapshots are taken, new estimates have been obtained and the apparent contradiction mentioned above has been explained, as well as better knowledge of the decay rate of pointwise errors when DQs are omitted from the data set.


\begin{thebibliography}{99}

\bibitem{Adams}
{R.~A. Adams}, {\em Sobolev spaces}, Academic Press [A subsidiary of
  Harcourt Brace Jovanovich, Publishers], New York-London, 1975.


\bibitem{Constantin-Foias} P. Constantin and C. Foias, {\em Navier-Stokes Equations\/}. Chicago Lectures on Mathematics. The University of Chicago Press, Chicago, 1988.
\bibitem{chapelle_et_al} D. Chapelle, A. Gariah and J. Sainte-Marie, 
\newblock Galerkin approximation with proper orthogonal decomposition: new error estimates and illustrative examples. 
\newblock {ESAIM: M2AN}, 46 (2012) 731-757.


\bibitem{locke-singler}
S.L.~Eskew and J. R.~Singler.
\newblock A new approach to proper orthogonal decomposition with difference quotients.
\newblock {\em Adv. Comput. Math.}, 49(2) (2023.), Paper No. 13.

\bibitem{nos_siam}
B. Garc\'{i}a-Archilla, V. John and J. Novo.
\newblock POD-ROMs for incompressible flows including snapshots of the temporal derivative of the full order solution.
\newblock {\em SIAM. J. Numer. Anal.}, 61 (3) (2023), 1340-1368.


\bibitem{nos_ap_let}
B. Garc\'{i}a-Archilla, V. John and J. Novo.
\newblock Second order error bounds for POD-ROM methods based on first order divided differences.
\newblock {\em Appl. Math. Letters}, 143 (2023), Paper No. 108836.

\bibitem{nos_semi}
B. Garc\'{i}a-Archilla and J. Novo.
\newblock POD-ROM methods: from a finite
set of snapshots to continuous-in-time approximations.
arXiv:2403.06967 [math.NA], {\tt  	
https://doi.org/10.48550/arXiv.
2403.06967}

\bibitem{iliescu_et_al_q}
{T.~Iliescu and Z.~Wang}, { Are the snapshot difference quotients needed
  in the proper orthogonal decomposition?} {\em SIAM J. Sci.\ Comput.\/}, 36 (2014),
  A1221--A1250, {\tt https://doi.org/10.1137/130925141}.
  
  \bibitem{iliescu_wang}
{T.~Iliescu and Z.~Wang}, {Variational multiscale proper orthogonal decomposition: Navier-Stokes equations}
{\em Numer.\ Meth.\ PDEs\/}, 30 (2) (2014), 641--663.
  
  
  \bibitem{nos_pos_supg}
{V.~John, B.~Moreau, and J.~Novo}, Error analysis of a
  {SUPG}-stabilized {POD}-{ROM} method for convection-diffusion-reaction
  equations, {\em Comput.\ Math.\ Appl.\/}, 122 (2022), 48--60,
  {\tt https://doi.org/10.1016/j.camwa.2022.07.017},
  
\bibitem{kean_sch}
{K.~Kean and M.~Schneier}, {Error analysis of supremizer pressure
  recovery for {POD} based reduced-order models of the time-dependent
  {N}avier-{S}tokes equations}, SIAM J. Numer. Anal., 58 (2020),
  pp.~2235--2264, {\tt https://doi.org/10.1137/19M128702X}.


\bibitem{samu_et_al}
B.~Koc, S.~Rubino, M.~Schneier, J.~Singler, and T.~Iliescu.
\newblock On optimal pointwise in time error bounds and difference quotients
  for the proper orthogonal decomposition.
\newblock {\em SIAM J. Numer. Anal.}, 59(4) (2021), 2163--2196.

\bibitem{koc_chacon-rubino} 
B.~Koc, T. Chac\'on Rebollo, S.~Rubino, Uniform bounds with difference quotients for proper orthogonal decomposition reduced order models of the Burger's equation,
{\em J. Sci.\ Comput.\/}, 95(2) (2023), 43.

\bibitem{Ku-Vol}
K.~Kunisch and S.~Volkwein.
\newblock Galerkin proper orthogonal decomposition methods for parabolic
  problems.
\newblock {\em Numer. Math.}, 90(1) (2001), 117--148.

\bibitem{Kunisch-Volkwein2}
K.~Kunisch and S.~Volkwein.
\newblock Galerkin proper orthogonal decomposition methods for a general equation in fluid dynamics
\newblock {\em SIAM J. Numer.\ Anal.}, 40 (2002), 492--515.


\bibitem{Joan_toros} J. S\'anchez, M. Net and C. Sim\'o,
\newblock Computation of invariant tori by Newton-Krylov methods in large-scale dissipative systems.
\newblock {\em Physica D}, 59(4) (2021), 2163--219.
 {\tt https://doi.org/10.1016/j.physd.2009.10.012}.

\bibitem{singler}
{\sc J.~R. Singler}, New {POD} error expressions, error bounds, and
  asymptotic results for reduced order models of parabolic {PDE}s, {\em SIAM J.
  Numer. Anal.\/}, 239 (2010), 123--133,
 {\tt https://doi.org/10.1137/120886947}.

\end{thebibliography}
\end{document}